\documentclass[12pt,a4paper,reqno]{amsart} 

\pagestyle{headings}
\setcounter{page}{1}

\addtolength{\hoffset}{-1.25cm}

\addtolength{\textwidth}{2.5cm}

\addtolength{\voffset}{-1cm}

\addtolength{\textheight}{2cm}




\usepackage[T1]{fontenc}       

\usepackage{amsfonts}          

\usepackage{amsmath}

\usepackage{amssymb}

\usepackage{overpic}
\usepackage{euscript} 
\usepackage{eurosym}
\usepackage{comment}
\usepackage{mathrsfs} 
\usepackage{ifthen}
\usepackage{esint} 
\usepackage{color}

\long\def\symbolfootnote[#1]#2{\begingroup%
\def\thefootnote{\fnsymbol{footnote}}\footnote[#1]{#2}\endgroup}

\setlength{\parindent}{20pt}

{\qed\vspace{5pt}}

\usepackage{amsthm}

\newtheoremstyle{lause}
{5pt}
{5pt}
{\slshape}
{\parindent}
{\bfseries}
{.}
{.5em}
{}

\theoremstyle{lause}

\newtheoremstyle{maaritelma}
{5pt}
{5pt}
{\rmfamily}
{\parindent}
{\bfseries}
{.}
{.5em}
{}

\theoremstyle{maaritelma}
\newtheoremstyle{lause}
{5pt}
{5pt}
{\slshape}
{\parindent}
{\bfseries}
{.}
{.5em}
{}

\theoremstyle{lause}
\newtheorem{theorem}{Theorem}[section]
\newtheorem{lemma}[theorem]{Lemma}

\newtheorem{corollary}[theorem]{Corollary}

\newtheoremstyle{maaritelma}
{5pt}
{5pt}
{\rmfamily}
{\parindent}
{\bfseries}
{.}
{.5em}
{}

\theoremstyle{maaritelma}
\newtheorem{definition}[theorem]{Definition}

\newtheorem{example}[theorem]{Example}
\newtheorem{remark}[theorem]{Remark}

\DeclareMathOperator*{\essinf}{ess\,inf}

\numberwithin{equation}{section}

\begin{document}

\thispagestyle{empty}

\begin{center}

{\large{\textbf{
On the role of the point at infinity in Deny's principle of positivity of mass for Riesz potentials
}}}

\vspace{18pt}

\textbf{Natalia Zorii}

\vspace{18pt}


\footnotesize{\address{Institute of Mathematics, Academy of Sciences
of Ukraine, Tereshchenkivska~3, 01601,
Kyiv-4, Ukraine\\
natalia.zorii@gmail.com }}

\end{center}

\vspace{12pt}

{\footnotesize{\textbf{Abstract.} First introduced by J.~Deny, the classical principle of positivity of mass states that if $\kappa_\alpha\mu\leqslant\kappa_\alpha\nu$ everywhere on $\mathbb{R}^n$, then $\mu(\mathbb{R}^n)\leqslant\nu(\mathbb{R}^n)$. Here $\mu,\nu$ are positive Radon measures on $\mathbb{R}^n$, $n\geqslant2$, and $\kappa_\alpha\mu$ is the potential of $\mu$ with respect to the Riesz kernel $|x-y|^{\alpha-n}$ of order $\alpha\in(0,2]$, $\alpha<n$. We strengthen Deny's principle by showing that $\mu(\mathbb{R}^n)\leqslant\nu(\mathbb{R}^n)$ still holds even if $\kappa_\alpha\mu\leqslant\kappa_\alpha\nu$ is fulfilled only on a proper subset $A$ of $\mathbb{R}^n$ that is not inner $\alpha$-thin at infinity; and moreover, this condition on $A$ cannot in general be improved. Hence, if $\xi$ is a signed measure on $\mathbb{R}^n$ with $\int1\,d\xi>0$, then $\kappa_\alpha\xi>0$  everywhere on $\mathbb{R}^n$, except for a subset which is inner $\alpha$-thin at infinity. The analysis performed is based on the author's recent theories of inner Riesz balayage and inner Riesz equilibrium measures (Potential Anal., 2022), the inner equilibrium measure being understood in an extended sense where both the energy and the total mass may be infinite.
}}
\symbolfootnote[0]{\quad 2010 Mathematics Subject Classification: Primary 31C15.}
\symbolfootnote[0]{\quad Key words: Principle of positivity of mass for $\alpha$-Riesz potentials, inner $\alpha$-thinness at infinity, inner $\alpha$-Riesz balayage, a generalized concept of inner $\alpha$-Riesz equilibrium measure.}

\vspace{6pt}

\markboth{\emph{Natalia Zorii}} {\emph{On the role of the point at infinity in Deny's principle of positivity of mass for Riesz potentials
}}

\section{A strengthened version of Deny's principle of positivity of mass}\label{intr}

In this paper we shall deal with the theory of potentials on $\mathbb{R}^n$, $n\geqslant2$, with respect to the $\alpha$-Riesz kernel $\kappa_\alpha(x,y):=|x-y|^{\alpha-n}$ of order $\alpha\in(0,2]$, $\alpha<n$, where $|x-y|$ is the Euclidean distance between $x,y\in\mathbb{R}^n$. Denote by $\mathfrak{M}^+$ the cone of all positive Radon measures $\mu$ on $\mathbb{R}^n$ such that the $\alpha$-Riesz {\it potential\/}
\[\kappa_\alpha\mu(x):=\int\kappa_\alpha(x,y)\,d\mu(y)\]
is not identically infinite, which according to \cite[Section~I.3.7]{L} occurs if and only if
\begin{equation}\label{main}
 \int_{|y|>1}\frac{d\mu(y)}{|y|^{n-\alpha}}<\infty.
\end{equation}
Note that the measures in question may be {\it unbounded}, namely with $\mu(\mathbb{R}^n)=+\infty$.

The principle of positivity of mass was first introduced by J.~Deny (see e.g.\ \cite{D2}), and for $\alpha$-Riesz potentials it reads as follows \cite[Theorem~3.11]{FZ}.

\begin{theorem}\label{th-fz}
  For any\/ $\mu,\nu\in\mathfrak{M}^+$ such that
  \begin{equation}\label{FZ1}
  \kappa_\alpha\mu\leqslant\kappa_\alpha\nu\text{ \ everywhere on $\mathbb{R}^n$},
  \end{equation}
  we have\/ $\mu(\mathbb{R}^n)\leqslant\nu(\mathbb{R}^n)$.
\end{theorem}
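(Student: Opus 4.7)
My plan is to test the pointwise inequality $\kappa_\alpha\mu\leqslant\kappa_\alpha\nu$ against the $\alpha$-Riesz equilibrium measure of a large ball, and then let the radius tend to infinity.

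First, I will invoke, for each $R>0$, the $\alpha$-Riesz equilibrium measure $\gamma_R$ of the closed ball $\overline{B(0,R)}$ from classical Riesz potential theory (see e.g.\ \cite{L}): a positive measure of finite total mass, supported in $\overline{B(0,R)}$, whose potential satisfies $\kappa_\alpha\gamma_R(x)=1$ for every $x\in\overline{B(0,R)}$ and $\kappa_\alpha\gamma_R(x)\leqslant1$ for every $x\in\mathbb{R}^n$.

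Next, I will integrate the hypothesis against $\gamma_R$ and use Fubini (legal by nonnegativity of the kernel) to obtain the chain
\[\mu\bigl(\overline{B(0,R)}\bigr)=\int_{\overline{B(0,R)}}\kappa_\alpha\gamma_R\,d\mu\leqslant\int\kappa_\alpha\gamma_R\,d\mu=\int\kappa_\alpha\mu\,d\gamma_R\leqslant\int\kappa_\alpha\nu\,d\gamma_R=\int\kappa_\alpha\gamma_R\,d\nu\leqslant\nu(\mathbb{R}^n),\]
where the leftmost equality uses $\kappa_\alpha\gamma_R\equiv1$ on $\overline{B(0,R)}$, the two central equalities are Fubini, the middle inequality is the standing hypothesis, and the final inequality uses $\kappa_\alpha\gamma_R\leqslant1$ globally. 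Finally, I will let $R\to\infty$ and apply monotone convergence to the left-hand side, which yields $\mu(\mathbb{R}^n)\leqslant\nu(\mathbb{R}^n)$.

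The hard part is the verification of the cited properties of $\gamma_R$. Existence and finiteness are standard consequences of the positive finite $\alpha$-capacity of the closed ball. The \emph{everywhere} identity $\kappa_\alpha\gamma_R\equiv 1$ on $\overline{B(0,R)}$---rather than merely the quasi-everywhere identity produced by general equilibrium theory---relies on the rotational symmetry of the ball, which forces $\kappa_\alpha\gamma_R$ to be radial and lower semi-continuous, hence constant equal to $1$ on the entire closed ball. All of the potential-theoretic content of the proof is concentrated in this one point; once it is dispatched, the argument reduces to a single application of Fubini followed by a passage to the limit.
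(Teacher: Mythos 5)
Your proof is correct, and it is the classical argument for this principle. Note first that the paper itself does not prove Theorem~\ref{th-fz}: it imports it from \cite[Theorem~3.11]{FZ}, so there is no in-paper proof to compare against; but your route --- testing the hypothesis against the equilibrium measure $\gamma_R$ of $\overline{B(0,R)}$, applying Tonelli twice, and letting $R\to\infty$ --- is exactly the technique the paper deploys later in the proof of Theorem~\ref{th-any}, with the compact exhaustion $K\uparrow A$ there playing the role of your balls $\overline{B(0,R)}\uparrow\mathbb{R}^n$. Two remarks on the step you yourself single out as carrying all the weight. First, the final inequality $\int\kappa_\alpha\gamma_R\,d\nu\leqslant\nu(\mathbb{R}^n)$ rests on $\kappa_\alpha\gamma_R\leqslant1$ holding on \emph{all} of $\mathbb{R}^n$, i.e.\ on the Frostman maximum principle (Theorem~\ref{dom} with $\nu=0$, $q=1$); this is precisely where the restriction $\alpha\leqslant2$ enters, and it deserves explicit mention, since the statement fails for kernels lacking a maximum principle. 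Second, your symmetry argument for the everywhere-identity $\kappa_\alpha\gamma_R\equiv1$ on $\overline{B(0,R)}$ is a hair short at the center: rotational invariance shows the exceptional set is a union of spheres, and every sphere of positive radius has positive capacity, so only the point $0$ could survive; but lower semicontinuity at $0$ yields $\kappa_\alpha\gamma_R(0)\leqslant1$, which is the wrong direction. The standard repair is to observe that \emph{every} point of a closed ball is $\alpha$-regular --- e.g.\ by the Wiener criterion (\ref{w}), since each annular piece $A_j$ centered at a point of the ball contains a homothetic copy of a fixed solid set scaled by a factor comparable to $q^j$, whence $c_\alpha(A_j)\geqslant c\,q^{j(n-\alpha)}$ and the series diverges --- so that (\ref{onregu}) gives $\kappa_\alpha\gamma_R=1$ on the whole closed ball; alternatively one may quote the explicit Riesz formula for the equilibrium measure of a ball (see \cite{L}). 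With that point secured, every link in your chain holds in $[0,+\infty]$ with no integrability caveats, and the final passage $R\to\infty$ is monotone convergence, so the argument is complete.
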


It is easy to verify that (\ref{FZ1}) can be slightly weakened by replacing `everywhere on $\mathbb{R}^n$' by `nearly everywhere on $\mathbb{R}^n$' (see e.g.\ the author's recent result \cite[Theorem~2.1]{Z-arx-22}, establishing the principle of positivity of mass for potentials with respect to rather general function kernels on locally compact spaces). Recall that a proposition involving a variable point $x\in\mathbb{R}^n$ is said to hold {\it nearly everywhere\/} ({\it n.e.})\ on $A\subset\mathbb{R}^n$ if $c_\alpha(E)=0$, where $E$ is the set of $x\in A$ for which the proposition fails to hold, while $c_\alpha(E)$ is the {\it inner\/ $\alpha$-Riesz capacity\/} of $E$ \cite[Section~II.2.6]{L} (cf.\ Sect.~\ref{sec-prel'} below).

In the present study we shall show that Theorem~\ref{th-fz} still holds even if the assumption $\kappa_\alpha\mu\leqslant\kappa_\alpha\nu$ is fulfilled only on a proper subset $A$ of $\mathbb{R}^n$, which however must be `large enough' in an arbitrarily small neighborhood of $\infty_{\mathbb{R}^n}$, the Alexandroff point
of $\mathbb R^n$. This discovery illustrates a special role of the point at infinity in Riesz potential theory, in particular in the principle of positivity of mass.

To be exact, the following theorem holds true.\footnote{Theorem~\ref{th1} has already found an application to minimum $\alpha$-Riesz energy problems in the presence of external fields, see the author's recent work \cite{Z-Rarx} (Section~4.10, Proof of Theorem~2.13).}

\begin{theorem}\label{th1}
Given\/ $\mu,\nu\in\mathfrak{M}^+$, assume there exists\/ $A\subset\mathbb{R}^n$ which is not inner\/ $\alpha$-thin at infinity, and such that
\begin{equation*}
  \kappa_\alpha\mu\leqslant\kappa_\alpha\nu\text{ \ n.e.\ on $A$}.
  \end{equation*}
Then
\begin{equation*}\label{eq2-th1}
  \mu(\mathbb{R}^n)\leqslant\nu(\mathbb{R}^n).
  \end{equation*}
\end{theorem}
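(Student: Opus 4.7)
The plan is to reduce Theorem~\ref{th1} to Theorem~\ref{th-fz} in its sharpened `n.e.'\ form, by passing from $\mu,\nu$ to their inner $\alpha$-Riesz balayages $\mu^A,\nu^A$ onto $A$, whose theory is developed in the author's recent work cited in the abstract. These balayages are concentrated on $\overline{A}$, belong to $\mathfrak{M}^+$, and are characterized by
\[
\kappa_\alpha\mu^A\leqslant\kappa_\alpha\mu\text{ \ on }\mathbb{R}^n,\qquad
\kappa_\alpha\mu^A=\kappa_\alpha\mu\text{ \ n.e.\ on }A,
\]
and analogously for $\nu$.

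The decisive role of the hypothesis that $A$ is \emph{not} inner $\alpha$-thin at infinity is to ensure the mass-conservation identities $\mu^A(\mathbb{R}^n)=\mu(\mathbb{R}^n)$ and $\nu^A(\mathbb{R}^n)=\nu(\mathbb{R}^n)$. In the author's balayage theory this geometric condition on $A$ is precisely what is equivalent to total-mass conservation under balayage of \emph{every} $\mu\in\mathfrak{M}^+$, including the unbounded case $\mu(\mathbb{R}^n)=+\infty$; this is the place where the generalized concept of inner equilibrium measure (of possibly infinite mass and energy) will be indispensable, since no finite-energy argument can be used when $\mu$ is unbounded. Granting this, it suffices to prove $\mu^A(\mathbb{R}^n)\leqslant\nu^A(\mathbb{R}^n)$.

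Combining the three relations $\kappa_\alpha\mu^A=\kappa_\alpha\mu$, $\kappa_\alpha\mu\leqslant\kappa_\alpha\nu$, and $\kappa_\alpha\nu=\kappa_\alpha\nu^A$---each holding n.e.\ on $A$, with exceptional sets of inner $\alpha$-capacity zero, stable under finite union---one gets $\kappa_\alpha\mu^A\leqslant\kappa_\alpha\nu^A$ n.e.\ on $A$, hence $\mu^A$-a.e.\ on $\overline{A}$ (as $\mu^A$ does not charge sets of inner $\alpha$-capacity zero, and is supported in $\overline{A}$). The classical domination (maximum) principle, valid for the $\alpha$-Riesz kernel with $\alpha\in(0,2]$, then propagates the inequality to all of $\mathbb{R}^n$. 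Applying Theorem~\ref{th-fz} to $\mu^A,\nu^A$ yields $\mu^A(\mathbb{R}^n)\leqslant\nu^A(\mathbb{R}^n)$, and we are done.

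The main obstacle I anticipate lies in the mass-conservation step, specifically in the implication `$A$ not inner $\alpha$-thin at infinity $\Rightarrow \mu^A(\mathbb{R}^n)=\mu(\mathbb{R}^n)$' when $\mu$ is unbounded. Standard inner balayage on measures of finite energy does not suffice; one must approximate by a compact exhaustion of $A$, construct $\mu^A$ as a vague limit, and track total masses along that limit, using the extended equilibrium-measure theory to identify the limit and verify that no mass leaks to $\infty_{\mathbb{R}^n}$. By contrast, the subsequent domination step, which relies only on the restriction $\alpha\leqslant 2$ through the maximum principle, and the final invocation of Deny's principle itself, are comparatively routine.
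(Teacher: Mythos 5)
Your overall strategy coincides with the paper's: sweep onto $A$, use the non-thinness of $A$ to conserve total mass via Theorem~\ref{th-th}(iii), and finish with the classical principle (Theorem~\ref{th-fz}). The mass-conservation step that you flag as the main obstacle is in fact exactly the cited equivalence (iii) of Theorem~\ref{th-th} and needs no further work. The genuine gap is in the step you call ``comparatively routine'': passing from $\kappa_\alpha\mu^A\leqslant\kappa_\alpha\nu^A$ n.e.\ on $A$ to the same inequality on all of $\mathbb{R}^n$. Your justification rests on two claims that fail for the inner balayage of a \emph{general} $\mu\in\mathfrak{M}^+$. First, $\mu^A$ need not be $c_\alpha$-absolutely continuous: for $y\in A\cap A^r$ one has $\varepsilon_y^A=\varepsilon_y$, which charges the polar set $\{y\}$; moreover, even if $\mu^A$ ignored capacity-zero sets, it is concentrated on $\overline{A}$, which may be strictly larger than $A$, so an inequality known only n.e.\ on $A$ does not yield one $\mu^A$-a.e. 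Second, the domination principle (Theorem~\ref{dom}) is available only for $\mu\in\mathcal{E}_\alpha^+$ (or, in Landkof's form, for $c_\alpha$-absolutely continuous measures), whereas $\mu^A$ may have infinite energy and, as just noted, need not be $c_\alpha$-absolutely continuous. So the propagation step is unproven as written.

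The repair is to use the extremal characterization of the inner balayage rather than the domination principle. Since $\kappa_\alpha\nu\geqslant\kappa_\alpha\mu$ n.e.\ on $A$, we have $\nu\in\Gamma_{A,\mu}$, and Definition~\ref{def-bal} gives at once
\[
\kappa_\alpha\mu^A\leqslant\kappa_\alpha\nu\quad\text{everywhere on }\mathbb{R}^n .
\]
Applying Theorem~\ref{th-fz} to the pair $(\mu^A,\nu)$ yields $\mu^A(\mathbb{R}^n)\leqslant\nu(\mathbb{R}^n)$, and the identity $\mu^A(\mathbb{R}^n)=\mu(\mathbb{R}^n)$ finishes the proof; this is the paper's argument, and it never balayages $\nu$ at all. (If you prefer to keep $\nu^A$, note that $\nu^A\in\Gamma_{A,\mu}$ as well, by (\ref{ineq1}) for $\nu$ together with Lemma~\ref{str}, so the same extremal property gives $\kappa_\alpha\mu^A\leqslant\kappa_\alpha\nu^A$ everywhere --- but this detour buys nothing.)
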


\begin{remark}
The concept of inner $\alpha$-thinness of a set at infinity was introduced in \cite[Definition~2.1]{Z-bal2}.
Referring to Sect.~\ref{sec-prel} for details, at this point we only note that $A\subset\mathbb{R}^n$ {\it is not inner\/ $\alpha$-thin at infinity if and only if\/ $y=0$ is inner\/ $\alpha$-regular for\/ $A^*$, the inverse of\/ $A$ with respect to\/} $\{|x|=1\}$. Such $A$ may in particular be thought of as $\{x_i>q\}$, where $q\in\mathbb{R}$ and $x_i$ is the $i$-coordinate of $x\in\mathbb{R}^n$, or as $Q^c:=\mathbb{R}^n\setminus Q$, $Q\subset\mathbb{R}^n$ being bounded (see also Example~\ref{ex}). {\it If\/ $A$ is not inner\/ $\alpha$-thin at infinity, it is `rather large' at infinity in the sense that then necessarily\/ $c_\alpha(A)=\infty$, whereas the converse is in general not true\/} (see Corollary~\ref{twoC}, cf.\ Example~\ref{exx} for illustration).
\end{remark}

The following theorem shows that Theorem~\ref{th1} is {\it sharp\/} in the sense that the requirement on $A$ of not being $\alpha$-thin at infinity cannot in general be weakened.

\begin{theorem}\label{th-sharp}
  If a set\/ $A\subset\mathbb{R}^n$ is inner\/ $\alpha$-thin at infinity, there exist\/ $\mu_0,\nu_0\in\mathfrak{M}^+$ such that\/ $\kappa_\alpha\mu_0=\kappa_\alpha\nu_0$ n.e.\ on\/ $A$, but nonetheless, $\mu_0(\mathbb{R}^n)>\nu_0(\mathbb{R}^n)$.
\end{theorem}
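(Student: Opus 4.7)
The plan is to take $\mu_0 := \varepsilon_{y_0}$, a unit Dirac mass at a suitably chosen $y_0 \in \mathbb{R}^n$, and let $\nu_0$ be the inner $\alpha$-Riesz balayage of $\mu_0$ onto $A$, in the sense of the author's Potential Anal.\ 2021 theory. The defining property of this balayage gives $\kappa_\alpha \nu_0 = \kappa_\alpha \mu_0$ n.e.\ on $A$ automatically, while $\nu_0 \in \mathfrak{M}^+$ and $\nu_0(\mathbb{R}^n) \leq \mu_0(\mathbb{R}^n) = 1$. The theorem therefore reduces to selecting $y_0$ so that this last inequality is strict.

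For the total mass I would bring in the generalized inner $\alpha$-Riesz equilibrium measure $\gamma_A$ of $A$ in the extended sense emphasised in the abstract: its potential $\kappa_\alpha \gamma_A$ is bounded by $1$ everywhere on $\mathbb{R}^n$ and equals $1$ nearly everywhere on $A$, even though $\gamma_A(\mathbb{R}^n)$ and the energy of $\gamma_A$ may be infinite. Since neither $\nu_0$ nor $\gamma_A$ charges polar sets, Fubini together with the fact that $\kappa_\alpha \gamma_A = 1$ on the support of $\nu_0$ yields the mass formula
\[
\nu_0(\mathbb{R}^n) \;=\; \int \kappa_\alpha \gamma_A \, d\nu_0 \;=\; \int \kappa_\alpha \nu_0 \, d\gamma_A \;=\; \int \kappa_\alpha \mu_0 \, d\gamma_A \;=\; \int \kappa_\alpha \gamma_A \, d\mu_0 \;=\; \kappa_\alpha \gamma_A(y_0),
\]
where the middle equality uses $\kappa_\alpha \nu_0 = \kappa_\alpha \mu_0$ n.e.\ on $A$ and the fact that $\gamma_A$ is essentially concentrated on $A$. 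Hence the problem is now to exhibit a point $y_0 \in \mathbb{R}^n$ with $\kappa_\alpha \gamma_A(y_0) < 1$.

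The main obstacle is precisely this last step: realising the strict inequality $\kappa_\alpha \gamma_A < 1$ at an actual point of $\mathbb{R}^n$ under the sole hypothesis that $A$ is inner $\alpha$-thin at infinity, rather than only in an ideal limit at the Alexandroff point. For this I would invoke the characterisation recalled in the remark after Theorem~\ref{th1}: $A$ is inner $\alpha$-thin at infinity iff $0$ is inner $\alpha$-irregular for the Kelvin inverse $A^*$. Through the Kelvin transform $K(x) = x/|x|^2$, the potential $\kappa_\alpha \gamma_A$ corresponds, up to the standard Kelvin weight, to the potential of the inner balayage of $\varepsilon_0$ onto $A^*$; irregularity of $0$ for $A^*$ then means that this latter potential strictly undershoots $\kappa_\alpha \varepsilon_0$ along a sequence approaching $0$. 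Transferring this strict inequality back through $K$ produces a bona fide $y_0 \in \mathbb{R}^n$ with $\kappa_\alpha \gamma_A(y_0) < 1$, whereupon $\mu_0 := \varepsilon_{y_0}$ and $\nu_0 := \mu_0^A$ satisfy $\mu_0(\mathbb{R}^n) = 1 > \kappa_\alpha \gamma_A(y_0) = \nu_0(\mathbb{R}^n)$, as required. The two delicate points — justifying the duality identity in the extended, possibly infinite-mass framework, and Kelvin-transferring the strict inequality to a genuine point of $\mathbb{R}^n$ — should both be accessible through the machinery of \cite{Z-bal2}.
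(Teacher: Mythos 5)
Your choice of $\nu_0:=\mu_0^A$ together with the defining property $\kappa_\alpha\mu_0^A=\kappa_\alpha\mu_0$ n.e.\ on $A$ is exactly the paper's construction. Where you diverge is in producing the strict inequality $\nu_0(\mathbb{R}^n)<\mu_0(\mathbb{R}^n)$: the paper gets this in one line, because by Definition~\ref{def-th} and Theorem~\ref{th-th}(iii), inner $\alpha$-thinness of $A$ at infinity is \emph{equivalent} to the existence of some $\mu_0\in\mathfrak{M}^+$ with $\mu_0^A(\mathbb{R}^n)<\mu_0(\mathbb{R}^n)$. There is no need to take $\mu_0$ to be a Dirac measure, to pass through $\gamma_A$, or to transfer anything through the Kelvin transform; your second and third paragraphs in effect re-prove the implication (i)$\Rightarrow$(iii) of Theorem~\ref{th-th}, which is already quotable.

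That detour is, moreover, not gap-free as written. The first equality $\nu_0(\mathbb{R}^n)=\int\kappa_\alpha\gamma_A\,d\nu_0$ needs $\kappa_\alpha\gamma_A=1$ to hold $\nu_0$-almost everywhere; but $\nu_0=\varepsilon_{y_0}^A$ is in general carried by $\overline{A}$, not by $A$, and by (\ref{onregu}) and (\ref{KE}) the equality $\kappa_\alpha\gamma_A=1$ is guaranteed only on $A^r$ and n.e.\ on $A$ --- the potential may drop below $1$ on $A_I\setminus A$, a set which can have positive capacity. So ``$\kappa_\alpha\gamma_A=1$ on the support of $\nu_0$'' is not a fact; one needs a Kellogg-type statement that the inner swept measure $\varepsilon_{y_0}^A$ does not charge the irregular points, which is available in \cite{Z-bal2} but is neither quoted in this paper nor automatic. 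The same issue undermines the middle equality: ``$\gamma_A$ is essentially concentrated on $A$'' is false in general ($\gamma_A$ lives on $\overline{A}$ and can charge $\overline{A}\setminus A$), so an identity valid only n.e.\ on $A$ cannot be integrated against $\gamma_A$ without further regularity information. (Both steps are done cleanly by the symmetry relation of Corollary~\ref{cor-sym}, rather than by Fubini plus pointwise equalities.) Your final step --- extracting a genuine point $y_0$ with $\kappa_\alpha\gamma_A(y_0)<1$ from the irregularity of $0$ for $A^*$ --- is sound, since $\varepsilon_0\ne\varepsilon_0^{A^*}$ forces the two potentials to differ on a set of positive Lebesgue measure, and (\ref{har-eq}) converts this into $\kappa_\alpha\gamma_A<1$ somewhere on $\mathbb{R}^n$. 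In short: the skeleton is right, but either replace the entire mass computation by a citation of Theorem~\ref{th-th}(iii), or supply the missing Kellogg-type property of $\varepsilon_{y_0}^A$.
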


Nevertheless, Theorem~\ref{th1} remains valid for {\it arbitrary\/} $A\subset\mathbb{R}^n$ once we impose upon the measures $\mu$ and $\nu$ suitable additional requirements (see Theorem~\ref{th-any}).

A measure $\mu\in\mathfrak{M}^+$ is said to be {\it concentrated\/} on $A\subset\mathbb{R}^n$ if $A^c$ is $\mu$-negligible, or equivalently if $A$ is $\mu$-measurable and $\mu=\mu|_A$, $\mu|_A$ being the restriction of $\mu$ to $A$.  We denote by $\mathfrak{M}^+_A$ the cone of all $\mu\in\mathfrak{M}^+$ concentrated on $A$. (For a closed set $A$, a measure $\mu$ belongs to $\mathfrak{M}^+_A$ if and only if its support $S(\mu)$ is contained in $A$.)

A measure $\mu\in\mathfrak M^+$ is said to be {\it $c_\alpha$-absolutely continuous\/} if $\mu(K)=0$ for every compact set $K\subset\mathbb R^n$ with $c_\alpha(K)=0$. This certainly occurs if $\int\kappa_\alpha\mu\,d\mu$ is finite or, more generally, if $\kappa_\alpha\mu$ is locally bounded (but not conversely, see \cite[pp.~134--135]{L}).

\begin{theorem}\label{th-any}
  For any set\/ $A\subset\mathbb{R}^n$ and any\/ $c_\alpha$-absolutely continuous measures\/ $\mu,\nu\in\mathfrak{M}^+_A$ such that\/ $\kappa_\alpha\mu\leqslant\kappa_\alpha\nu$ n.e.\ on\/ $A$, we still have\/ $\mu(\mathbb{R}^n)\leqslant\nu(\mathbb{R}^n)$.
\end{theorem}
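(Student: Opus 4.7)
\emph{Proof plan.} My plan is to truncate $\mu$ to a sequence of compactly supported, bounded measures, then apply the classical Riesz domination principle to upgrade the hypothesis from ``n.e.\ on $A$'' to ``everywhere on $\mathbb{R}^n$'' for each truncation, and finally invoke Theorem~\ref{th-fz} together with monotone convergence. The $c_\alpha$-absolute continuity hypothesis enters decisively as the tool that converts an inner-capacity exceptional set into a null set for the truncated measure.

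First, for each integer $r\geqslant 1$, I would set $\mu_r:=\mu|_{\{|x|\leqslant r\}}$; this measure has compact support, finite total mass, is concentrated on $A$, and inherits $c_\alpha$-absolute continuity from $\mu$. From $\mu_r\leqslant\mu$ one deduces $\kappa_\alpha\mu_r\leqslant\kappa_\alpha\mu\leqslant\kappa_\alpha\nu$ n.e.\ on $A$. Letting $E\subset A$ denote the subset where this inequality fails, one has $c_\alpha(E)=0$ and hence $\mu_r(E)=0$ by $c_\alpha$-absolute continuity; combined with $\mu_r(A^c)=0$, this yields $\kappa_\alpha\mu_r\leqslant\kappa_\alpha\nu$ $\mu_r$-a.e.\ on $\mathbb{R}^n$. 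Next, I would invoke the classical domination principle for $\alpha$-Riesz kernels with $\alpha\in(0,2]$---applicable because $\mu_r$ is a $c_\alpha$-absolutely continuous measure of compact support and finite total mass---to extend this inequality to all of $\mathbb{R}^n$.

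Finally, Theorem~\ref{th-fz} applied to the pair $(\mu_r,\nu)$ gives $\mu_r(\mathbb{R}^n)\leqslant\nu(\mathbb{R}^n)$, and since $\mu_r\uparrow\mu$ as $r\to\infty$, monotone convergence delivers $\mu(\mathbb{R}^n)\leqslant\nu(\mathbb{R}^n)$. The principal obstacle is the domination step: it is the conjunction of $c_\alpha$-absolute continuity (turning the inner-$c_\alpha$-null exceptional set into a $\mu_r$-null one) with the complete maximum principle for $\alpha$-Riesz kernels of order $\alpha\leqslant 2$ (promoting $\mu_r$-a.e.\ to everywhere) that lets the argument bypass the inner-$\alpha$-thinness obstruction exhibited by Theorem~\ref{th-sharp}. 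Without $c_\alpha$-absolute continuity the exceptional set could carry positive $\mu_r$-mass, the upgrade from n.e.\ to everywhere would break, and the counterexamples of Theorem~\ref{th-sharp} become possible.
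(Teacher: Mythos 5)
Your proof is correct, but it takes a genuinely different route from the paper's. The paper first disposes of the case where $A$ is not inner $\alpha$-thin at infinity by appealing to Theorem~\ref{th1}, and in the remaining case tests the hypothesis against the generalized inner equilibrium measure: Fubini's theorem gives $\int\kappa_\alpha\gamma_K\,d\mu\leqslant\int\kappa_\alpha\gamma_K\,d\nu$ for every compact $K\subset A$, monotone convergence along the net $K\uparrow A$ upgrades this to $\int\kappa_\alpha\gamma_A\,d\mu\leqslant\int\kappa_\alpha\gamma_A\,d\nu$, and the identity $\kappa_\alpha\gamma_A=1$ n.e.\ on $A$ together with the $c_\alpha$-absolute continuity and concentration on $A$ of \emph{both} measures converts this into the mass inequality. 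You instead truncate $\mu$, promote the hypothesis to all of $\mathbb{R}^n$ for each truncation via the complete maximum principle, and then invoke Theorem~\ref{th-fz}; this bypasses the equilibrium-measure theory of Sect.~\ref{sec-prel} entirely, needs no case distinction on thinness at infinity, and---since the hypotheses on $\nu$ are never used---actually yields the formally stronger statement in which $\nu$ may be an arbitrary measure of $\mathfrak{M}^+$, neither concentrated on $A$ nor $c_\alpha$-absolutely continuous. The one point you must make explicit is the domination step: Theorem~\ref{dom} as stated in the paper requires $\mu\in\mathcal{E}^+_\alpha$, and your truncation $\mu_r:=\mu|_{\{|x|\leqslant r\}}$, though bounded, compactly supported, and $c_\alpha$-absolutely continuous, need not have finite energy ($c_\alpha$-absolute continuity does not imply finite energy or even local boundedness of the potential, cf.\ \cite[pp.~134--135]{L}). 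You therefore need the complete maximum principle in its form for $c_\alpha$-absolutely continuous measures of order $\alpha\leqslant2$, which is exactly \cite[Theorem~1.29]{L}; with that citation in place of the paper's Theorem~\ref{dom}, your argument is complete.
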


\begin{remark}\label{rem-any}
  If $A\cap A_I=\varnothing$, where $A_I$ consists of all inner $\alpha$-irregular points for the set $A$, then the condition of the $c_\alpha$-absolute continuity imposed on $\mu$ and $\nu$, is unnecessary for the validity of Theorem~\ref{th-any}. Namely, {\it for any\/ $A\subset\mathbb{R}^n$ such that\/ $A\cap A_I=\varnothing$, and any\/ $\mu,\nu\in\mathfrak{M}^+_A$ with the property\/ $\kappa_\alpha\mu\leqslant\kappa_\alpha\nu$ n.e.\ on\/ $A$, we still have\/ $\mu(\mathbb{R}^n)\leqslant\nu(\mathbb{R}^n)$}. (See the end of Sect.~\ref{theend} for the proof of this assertion, and Sect.~\ref{subseq1} for the concept of inner $\alpha$-irregular point and relevant results.)
\end{remark}

Let $\mathfrak{M}$ stand for the linear space of all real-val\-ued (signed) Radon measures $\xi$ on $\mathbb R^n$ such that (\ref{main}) is fulfilled with $\mu$ replaced by $|\xi|:=\xi^++\xi^-$, where $\xi^+$ and $\xi^-$ denote the positive and negative parts of $\xi$ in the Hahn--Jor\-dan decomposition. Then the potential $\kappa_\alpha\xi$ of any $\xi\in\mathfrak{M}$ is well defined and finite n.e.\ on $\mathbb{R}^n$ (actually, even everywhere on $\mathbb{R}^n$ except for a polar set, see \cite[Section~III.1.1]{L}).

The next two corollaries follow directly from Theorem~\ref{th1}.

\begin{corollary}
  For any\/ $\xi\in\mathfrak{M}$ with\/ $\xi^+(\mathbb{R}^n)>\xi^-(\mathbb{R}^n)$, $\kappa_\alpha\xi>0$ holds true everywhere on $\mathbb{R}^n$, except for a subset that is inner $\alpha$-thin at infinity.
\end{corollary}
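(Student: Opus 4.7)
The plan is to argue by contradiction, applying Theorem~\ref{th1} to the positive and negative parts of $\xi$. First, decompose $\xi=\xi^+-\xi^-$ and set $\mu:=\xi^+$, $\nu:=\xi^-$; by the definition of $\mathfrak{M}$ both lie in $\mathfrak{M}^+$, and the hypothesis of the corollary now reads $\mu(\mathbb{R}^n)>\nu(\mathbb{R}^n)$. Each of $\kappa_\alpha\mu$ and $\kappa_\alpha\nu$ is finite everywhere on $\mathbb{R}^n$ except on a polar set (as recalled just before the corollary via \cite[Section~III.1.1]{L}), so their difference $\kappa_\alpha\xi=\kappa_\alpha\mu-\kappa_\alpha\nu$ is unambiguously defined on $\mathbb{R}^n\setminus N$, where $N$ is a polar set; in particular $c_\alpha(N)=0$.

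Next, I would introduce the prospective exceptional set
\[
E:=N\cup\bigl\{x\in\mathbb{R}^n\setminus N:\kappa_\alpha\xi(x)\leqslant 0\bigr\},
\]
so that $\kappa_\alpha\xi$ is defined and strictly positive everywhere on $\mathbb{R}^n\setminus E$ by construction. It then suffices to verify that $E$ is inner $\alpha$-thin at infinity. By the very definition of $E$, the inequality $\kappa_\alpha\mu\leqslant\kappa_\alpha\nu$ holds pointwise on $E\setminus N$, and the exceptional subset $E\cap N=N$ is polar, so $\kappa_\alpha\mu\leqslant\kappa_\alpha\nu$ holds n.e.\ on $E$.

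The finish is then the contrapositive of Theorem~\ref{th1}: if $E$ were not inner $\alpha$-thin at infinity, the n.e.\ inequality on $E$ would force $\mu(\mathbb{R}^n)\leqslant\nu(\mathbb{R}^n)$, contradicting the assumption $\xi^+(\mathbb{R}^n)>\xi^-(\mathbb{R}^n)$. Hence $E$ must be inner $\alpha$-thin at infinity, which is precisely the desired conclusion. No serious obstacle arises; the only delicate point is absorbing into $E$ the polar set on which $\kappa_\alpha\xi$ may fail to be defined, and this is harmless because Theorem~\ref{th1} only requires the inequality to hold \emph{nearly} everywhere on $E$.
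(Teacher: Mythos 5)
Your proposal is correct and matches the paper's intent exactly: the paper states only that this corollary "follows directly from Theorem~\ref{th1}", and your argument — taking $\mu:=\xi^+$, $\nu:=\xi^-$, letting $E$ be the set where $\kappa_\alpha\xi$ is undefined or nonpositive, noting $\kappa_\alpha\mu\leqslant\kappa_\alpha\nu$ n.e.\ on $E$, and applying the contrapositive of Theorem~\ref{th1} — is precisely that direct derivation. Your handling of the polar set $N$ by absorbing it into $E$ and relying on the "nearly everywhere" hypothesis of Theorem~\ref{th1} is the right way to dispose of the only delicate point.
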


\begin{corollary}
 For any\/ $\xi\in\mathfrak{M}$ with\/ $\xi^+(\mathbb{R}^n)\ne\xi^-(\mathbb{R}^n)$, the set of all\/ $x\in\mathbb{R}^n$ for which\/ $\kappa_\alpha\xi(x)=0$ is inner\/ $\alpha$-thin at infinity.
\end{corollary}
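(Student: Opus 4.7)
The plan is to derive this corollary as a direct consequence of Theorem~\ref{th1} via contradiction, using the Hahn--Jordan decomposition $\xi=\xi^+-\xi^-$. By symmetry it suffices to treat the case $\xi^+(\mathbb{R}^n)>\xi^-(\mathbb{R}^n)$; the complementary case follows by applying the same argument to $-\xi$.

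Set $A:=\{x\in\mathbb{R}^n:\kappa_\alpha\xi(x)=0\}$, where the potential $\kappa_\alpha\xi=\kappa_\alpha\xi^+-\kappa_\alpha\xi^-$ is well defined and finite n.e.\ on $\mathbb{R}^n$ (this is the remark preceding the corollaries, referring to \cite[Section~III.1.1]{L}); the residual exceptional set is polar and hence of inner $\alpha$-capacity zero, so it is irrelevant for any n.e.\ statement. Since $\xi^+,\xi^-\in\mathfrak{M}^+$, the identity $\kappa_\alpha\xi^+=\kappa_\alpha\xi^-$ holds n.e.\ on $A$ by the very definition of $A$; in particular
\[
\kappa_\alpha\xi^+\leqslant\kappa_\alpha\xi^-\quad\text{n.e.\ on }A.
\]
Now suppose, for the sake of contradiction, that $A$ is \emph{not} inner $\alpha$-thin at infinity. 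Theorem~\ref{th1} applied with $\mu=\xi^+$ and $\nu=\xi^-$ then yields $\xi^+(\mathbb{R}^n)\leqslant\xi^-(\mathbb{R}^n)$, contradicting our standing assumption. Hence $A$ must be inner $\alpha$-thin at infinity, as claimed.

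There is no real obstacle here: the entire content lies in recognising that the equality $\kappa_\alpha\xi=0$ on $A$ instantly supplies the hypothesis of Theorem~\ref{th1} for the pair $(\xi^+,\xi^-)$. The only minor point requiring a word of care is that $\kappa_\alpha\xi$ is not literally defined at every $x$, but the exceptional set is polar and therefore negligible for the n.e.\ formulation; so absorbing it into $A$ (or into its complement) changes nothing in the argument.
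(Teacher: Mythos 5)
Your argument is correct and coincides with the paper's intended derivation: the paper offers no separate proof, merely noting that this corollary follows directly from Theorem~\ref{th1}, and applying that theorem to the pair $(\xi^+,\xi^-)$ (or $(\xi^-,\xi^+)$) on the zero set of $\kappa_\alpha\xi$ is precisely that direct deduction. Your side remark about the polar exceptional set where $\kappa_\alpha\xi$ is undefined is accurate and handled appropriately.
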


\begin{example}\label{ex} Let $n=3$ and $\alpha=2$. Consider the rotation bodies
\begin{equation}\label{descr}F_j:=\bigl\{x\in\mathbb R^3: \ 0\leqslant x_1<\infty, \
x_2^2+x_3^2\leqslant\varrho_j^2(x_1)\bigr\}, \ j=1,2,\end{equation}
where
\begin{align}
\varrho_1(x_1)&:=x_1^{-s}\text{ \ with\ }s\in[0,\infty),\notag\\
\varrho_2(x_1)&:=\exp(-x_1^s)\text{ \ with\ }s\in(0,\infty).\label{ex2}
\end{align}
By the strengthened principle of positivity of mass (Theorem~\ref{th1}), for any $\mu,\nu\in\mathfrak{M}^+$ such that $\kappa_\alpha\mu\leqslant\kappa_\alpha\nu$ n.e.\ on $F_1$, we have $\mu(\mathbb{R}^3)\leqslant\nu(\mathbb{R}^3)$, the set $F_1$ being not $2$-thin at infinity \cite[Example~2.1]{Z-bal2} (see Figure~\ref{Fig2}).

On the other hand, the set $F_2$ is $2$-thin at infinity \cite[Example~2.1]{Z-bal2} (see Figure~\ref{Fig1}), and hence, according to Theorem~\ref{th-sharp}, there are $\mu_0,\nu_0\in\mathfrak{M}^+$ such that
\[\nu_0(\mathbb{R}^3)<\mu_0(\mathbb{R}^3),\text{ \ though \ }\kappa_\alpha\mu_0=\kappa_\alpha\nu_0\text{\ n.e.\ on $F_2$}.\]
(In fact, any nonzero bounded positive measure concentrated on $F_2^c$ may serve as $\mu_0$, and the Newtonian balayage of this  $\mu_0$ onto $F_2$ then serves as $\nu_0$ \cite[Example~8.8]{Z-bal}.)
\end{example}

\begin{figure}[htbp]
\begin{center}
\vspace{-.8in}
\hspace{-.1in}\includegraphics[width=4.6in]{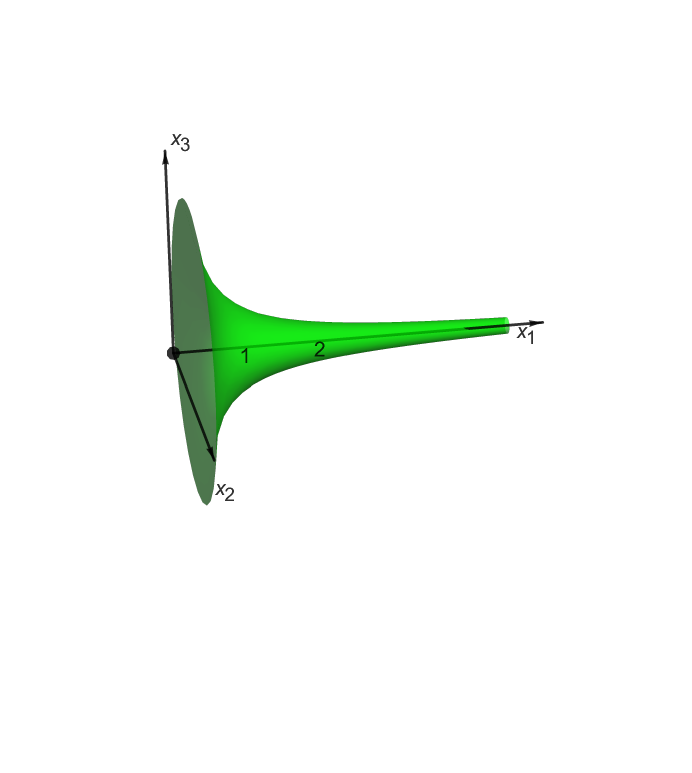}
\vspace{-1.6in}
\caption{The set $F_1$ in Example~\ref{ex} with $\varrho_1(x_1)=1/x_1$.\vspace{-.1in}}
\label{Fig2}
\end{center}
\end{figure}

\begin{figure}[htbp]
\begin{center}
\vspace{-.4in}
\hspace{-1.1in}\includegraphics[width=4.2in]{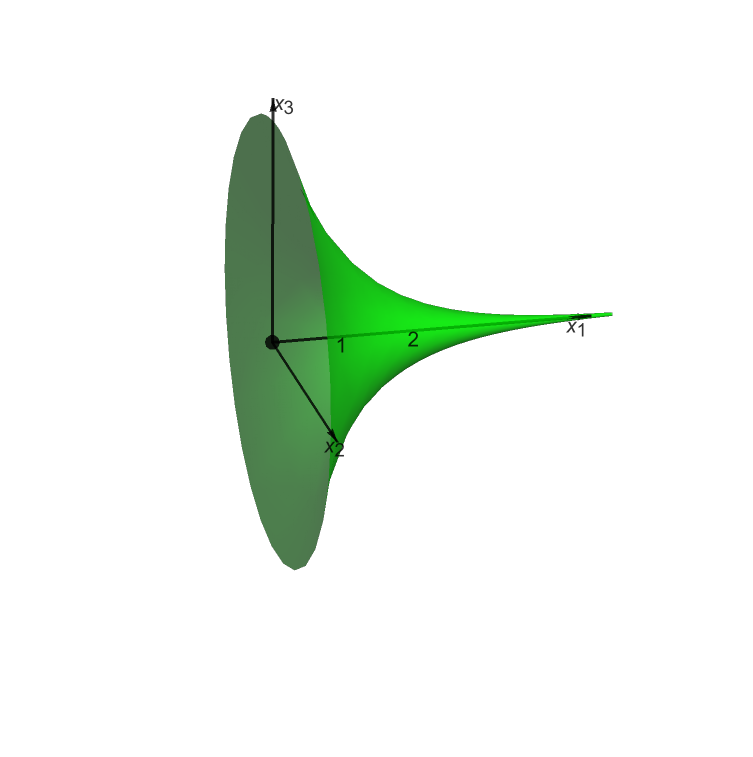}
\vspace{-.8in}
\caption{The set $F_2$ in Example~\ref{ex} with $\varrho_2(x_1)=\exp(-x_1)$.\vspace{-.1in}}
\label{Fig1}
\end{center}
\end{figure}

The proofs of Theorems~\ref{th1}, \ref{th-sharp}, and \ref{th-any} (see Sect.~\ref{sec-proofs}) are based on the theory of inner $\alpha$-Riesz balayage as well as on the theory of inner $\alpha$-Riesz equilibrium measures, both originated in \cite{Z-bal,Z-bal2} (see also \cite{Z-arx1}--\cite{Z-arx} for some further relevant results). The inner $\alpha$-Riesz equilibrium measure is understood in an extended sense where its energy as well as its total mass may be infinite. To make the present study self-con\-tai\-ned, we give a brief summary of  \cite{Z-bal}--\cite{Z-arx} (see Sects.~\ref{subseq1}, \ref{sec-prel}). To begin with, we first review some basic facts of the theory of $\alpha$-Riesz potentials, see~\cite{L}.

\section{Basic facts of the theory of $\alpha$-Riesz potentials}\label{sec-prel'}

In what follows we shall use the notations and conventions introduced in Sect.~\ref{intr}.

Throughout the paper the linear space $\mathfrak{M}$ is meant to be equipped with the (Hausdorff) {\it vague\/} topology of pointwise convergence on the class $C_0(\mathbb R^n)$ of all continuous functions $f:\mathbb R^n\to\mathbb{R}$ of compact support.

The Riesz composition identity $\kappa_\alpha=\kappa_{\alpha/2}\ast\kappa_{\alpha/2}$ \cite[Section~1, Eq.~(12)]{R} (cf.\ also \cite[Eq.~(1.1.12)]{L}) implies that the kernel $\kappa_\alpha$ is {\it strictly positive definite}, which means that for any (signed) $\mu\in\mathfrak{M}$, the $\alpha$-Riesz {\it energy\/} \[\kappa_\alpha(\mu,\mu):=\int\kappa_\alpha(x,y)\,d(\mu\otimes\mu)(x,y)\]
is ${}\geqslant0$ whenever defined, and moreover it is zero only for zero measure. This in turn implies (see e.g.\ \cite[Lemma~3.1.2]{F1}) that all the measures in $\mathfrak{M}$ of {\it finite\/} $\alpha$-Riesz energy form a pre-Hil\-bert space $\mathcal E_\alpha$ with the inner product
\[\langle\mu,\nu\rangle:=\kappa_\alpha(\mu,\nu):=\int\kappa_\alpha(x,y)\,d(\mu\otimes\nu)(x,y)\]
and the energy norm $\|\mu\|:=\sqrt{\kappa_\alpha(\mu,\mu)}$. The (Hausdorff) topology on $\mathcal E_\alpha$ defined by means of this norm is said to be {\it strong}.

By Deny \cite{D1} (for $\alpha=2$, see also H.~Cartan \cite{Ca1}), the cone $\mathcal E^+_\alpha:=\{\mu\in\mathcal E_\alpha:\ \mu\geqslant0\}$ is strongly complete, and the strong topology on $\mathcal E^+_\alpha$ is finer than the (induced) vague topology.\footnote{In B.~Fuglede's terminology \cite{F1}, the Riesz kernel is, therefore, {\it perfect}.} Thus every strong Cauchy net $(\mu_s)\subset\mathcal E^+_\alpha$ converges to the same (unique) limit both strongly and vaguely. This in particular implies that if a set $A\subset\mathbb{R}^n$ is {\it closed\/} (in $\mathbb{R}^n$), then the cone $\mathcal E^+_A:=\mathcal E^+_\alpha\cap\mathfrak{M}^+_A$ is {\it strongly complete}, $\mathfrak{M}^+_A$ being vaguely closed \cite[Section~III.2, Proposition~6]{B2}.

The {\it inner\/ $\alpha$-Riesz capacity\/} $c_\alpha(A)$ of arbitrary $A\subset\mathbb{R}^n$ is defined by the formula\footnote{As usual, the infimum over the empty set is taken to be $+\infty$. We also agree that $1/(+\infty)=0$ and $1/0 = +\infty$.\label{f1}}
\begin{equation}\label{capp}
  c_\alpha(A):=1\bigl/\inf\,\|\mu\|^2,
\end{equation}
the infimum being taken over all $\mu\in\mathcal E^+_A$ with $\mu(\mathbb{R}^n)=1$. If $c_\alpha(A)<\infty$, there exists the unique solution $\gamma_A$ to the minimum $\alpha$-Riesz energy problem over the class of all $\nu\in\mathcal E^+_\alpha$ such that $\kappa_\alpha\nu\geqslant1$ n.e.\ on $A$ (see e.g.\ \cite[Theorem~4.1]{F1}). This $\gamma_A$ is said to be {\it the inner\/ $\alpha$-Riesz equilibrium measure\/} for $A$,\footnote{We also refer to the author's recent work \cite{Z-arx-22} providing a number of alternative characterizations of the inner capacity $c_\alpha(A)$ and the inner equilibrium measure $\gamma_A$, the results in \cite{Z-arx-22} being actually valid even for quite a large class of general function kernels on locally compact spaces.} and it satisfies the relations
\begin{gather}\gamma_A(\mathbb{R}^n)=\kappa_\alpha(\gamma_A,\gamma_A)=c_\alpha(A)\in[0,\infty),\label{fcap1}\\
\kappa_\alpha\gamma_A=1\text{ \ n.e.\ on $A$}.\label{fcap2}\end{gather}

For closed $A\subset\mathbb{R}^n$, the inner $\alpha$-Riesz equilibrium measure $\gamma_A$ can alternatively be characterized as the only measure in $\mathcal E^+_A$ satisfying (\ref{fcap2}). But if $A$ is not closed, $\gamma_A$ may not be concentrated on the set $A$ itself, but on $\overline{A}$, the closure of $A$ in $\mathbb{R}^n$; and moreover, there is in general no $\nu\in\mathcal E^+_A$ having the property $\kappa_\alpha\nu=1$ n.e.\ on $A$. Regarding the latter, see \cite{Z-arx-22} (Theorem~1.1(e) and footnote~3).

For reasons of homogeneity,
\[c_\alpha(A)=0\iff\mathcal E^+_A=\{0\}\iff\mathcal E^+_K=\{0\}\text{ \ for every compact $K\subset A$}\]
 (cf.\ \cite[Lemma~2.3.1]{F1}). This in turn implies that {\it for any measure\/ $\mu\in\mathcal E^+_\alpha$ and any\/ $\mu$-mea\-sur\-ab\-le set\/ $A\subset\mathbb{R}^n$ with\/ $c_\alpha(A)=0$, $A$ is\/ $\mu$-neg\-lig\-ible}.

Along with the perfectness of the $\alpha$-Riesz kernels, the following Theorems~\ref{3.9} and \ref{dom} were crucial to the development of the theory of inner balayage and that of inner equilibrium measures $\gamma_A$, $\gamma_A$ being understood in an extended sense where both $\kappa_\alpha(\gamma_A,\gamma_A)$ and $\gamma_A(\mathbb{R}^n)$ may be infinite (see \cite{Z-bal}--\cite{Z-arx}, cf.\ Sects.~\ref{subseq1}, \ref{sec-prel} below).

\begin{theorem}\label{3.9}If a net\/ $(\kappa_\alpha\mu_s)_{s\in S}$, where\/ $(\mu_s)_{s\in S}\subset\mathfrak M^+$, increases  pointwise on\/ $\mathbb R^n$, and is majorized by\/ $\kappa_\alpha\nu$ for some\/ $\nu\in\mathfrak M^+$, then there exists\/ $\mu_0\in\mathfrak M^+$ such that\/ $\kappa_\alpha\mu_s\uparrow\kappa_\alpha\mu_0$ pointwise on\/ $\mathbb R^n$ and\/ $\mu_s\to\mu_0$ vaguely\/ {\rm(}as\/ $s$ ranges through\/ $S$\rm{)}.
\end{theorem}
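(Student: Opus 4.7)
My plan is to (i) establish vague boundedness of $(\mu_s)_{s\in S}$, (ii) extract a vague cluster point $\mu_0\in\mathfrak{M}^+$, (iii) identify $\kappa_\alpha\mu_0$ with the pointwise limit $u:=\sup_s\kappa_\alpha\mu_s$, and (iv) conclude by uniqueness of $\mu_0$ that $\mu_s\to\mu_0$ vaguely along the whole net. For vague boundedness, fix any nonzero $\varphi\in C_0^+(\mathbb{R}^n)$. Since $\alpha<n$, the potential $\kappa_\alpha\varphi$ is continuous and strictly positive on $\mathbb{R}^n$; in particular $c_K:=\min_K\kappa_\alpha\varphi>0$ for each compact $K$. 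By Tonelli,
\[c_K\,\mu_s(K)\leqslant\int\kappa_\alpha\varphi\,d\mu_s=\int\varphi\,\kappa_\alpha\mu_s\,dx\leqslant\int\varphi\,\kappa_\alpha\nu\,dx<\infty,\]
the right-hand side being finite since $\kappa_\alpha\nu$ is locally integrable (as follows from (\ref{main}) via Tonelli). Hence $\sup_s\mu_s(K)<\infty$ for every compact $K$, and vague compactness on bounded subsets of $\mathfrak{M}^+$ yields a subnet $\mu_{s'}\to\mu_0$ vaguely with $\mu_0\in\mathfrak{M}^+$.

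Next I identify $\kappa_\alpha\mu_0$ with $u$. As an increasing limit of lower semicontinuous functions, $u=\lim_s\kappa_\alpha\mu_s$ is itself lsc, and $u\leqslant\kappa_\alpha\nu$. The principle of descent, applied along the vague subnet, gives $\kappa_\alpha\mu_0\leqslant\liminf_{s'}\kappa_\alpha\mu_{s'}=u$ everywhere on $\mathbb{R}^n$. For the reverse direction, I test against $\varphi\in C_0^+(\mathbb{R}^n)$: by Tonelli and monotone convergence,
\[\lim_s\int\kappa_\alpha\varphi\,d\mu_s=\lim_s\int\varphi\,\kappa_\alpha\mu_s\,dx=\int\varphi\,u\,dx.\]
Provided one can upgrade vague convergence to $\int\kappa_\alpha\varphi\,d\mu_{s'}\to\int\kappa_\alpha\varphi\,d\mu_0=\int\varphi\,\kappa_\alpha\mu_0\,dx$, comparison then yields $u=\kappa_\alpha\mu_0$ Lebesgue-almost everywhere, and since both sides are lsc with $\kappa_\alpha\mu_0\leqslant u$ already, they agree pointwise on all of $\mathbb{R}^n$.

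The hard part is precisely this last upgrade, since $\kappa_\alpha\varphi$ is continuous but only decays as $|x|^{\alpha-n}$ at infinity rather than being compactly supported. The pointwise monotonicity of the potentials is what rescues the argument: choosing $x_0\in\mathbb{R}^n$ with $\kappa_\alpha\nu(x_0)<\infty$ (valid outside a polar set), one has $\int|x-x_0|^{\alpha-n}\,d\mu_s(x)=\kappa_\alpha\mu_s(x_0)\uparrow u(x_0)\leqslant\kappa_\alpha\nu(x_0)<\infty$, so after splitting $\kappa_\alpha\varphi=\eta\kappa_\alpha\varphi+(1-\eta)\kappa_\alpha\varphi$ with $\eta\in C_0^+(\mathbb{R}^n)$ a cutoff equal to $1$ on a large ball $B_R$, the compactly supported piece converges by definition of the vague topology, while the tail is uniformly controlled via $\kappa_\alpha\varphi(x)\leqslant C|x-x_0|^{\alpha-n}$ (for $|x|$ large, say $|x|>2r$ with $\varphi$ supported in $B_r$) together with the convergence of $\kappa_\alpha\mu_s(x_0)$ to extract the necessary uniform tightness. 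Once $u=\kappa_\alpha\mu_0$ is established everywhere, strict positive definiteness of $\kappa_\alpha$ (via the Riesz composition identity) forces uniqueness of the measure in $\mathfrak{M}^+$ with potential $u$; hence every vague cluster point of $(\mu_s)$ coincides with $\mu_0$, and vague boundedness combined with uniqueness of the cluster point implies $\mu_s\to\mu_0$ vaguely along the entire net.
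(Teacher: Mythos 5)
Your overall architecture (vague boundedness, extraction of a cluster point $\mu_0$, identification of $\kappa_\alpha\mu_0$ with $u:=\sup_s\kappa_\alpha\mu_s$, uniqueness of the cluster point) is reasonable, and steps (i) and the descent inequality $\kappa_\alpha\mu_0\leqslant u$ are fine. But the step you yourself single out as ``the hard part'' contains a genuine gap: the uniform tightness of the measures $|x-x_0|^{\alpha-n}\,d\mu_s$ at infinity does \emph{not} follow from the convergence $\kappa_\alpha\mu_s(x_0)\uparrow u(x_0)\leqslant\kappa_\alpha\nu(x_0)<\infty$, which is the only place your argument actually uses the majorant $\nu$. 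Concretely, take $\alpha=2$, $n\geqslant3$, and let $\mu_j$ be the uniform distribution of total mass $c_j:=L_jR_j^{n-2}$ on the sphere $\{|y|=R_j\}$, with $R_j\uparrow\infty$ and $L_j\uparrow L\in(0,\infty)$. By Newton's theorem, $\kappa_2\mu_j(x)=L_j\min\bigl\{1,(R_j/|x|)^{n-2}\bigr\}$, so $(\kappa_2\mu_j)$ increases pointwise on all of $\mathbb R^n$ to the constant $L$; in particular $\kappa_2\mu_j(x_0)$ converges to a \emph{finite} limit at \emph{every} point. Yet $\mu_j\to0$ vaguely, and for each fixed $R$ the tail $\int_{|x|>R}|x-x_0|^{2-n}\,d\mu_j(x)$ tends to $L\ne0$: all the mass, and all the potential, escape to infinity. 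This family simply fails the hypothesis that some $\kappa_\alpha\nu$ with $\nu\in\mathfrak M^+$ majorizes it (no such potential is bounded below by a positive constant), but since your tightness argument never invokes the majorant beyond finiteness at a single point, it cannot tell this situation apart from the legitimate one. Closing the gap requires using the global domination $u\leqslant\kappa_\alpha\nu$ in an essential way (e.g., showing that the ``lost'' part $u-\kappa_\alpha\mu_0$, being a nonnegative $\alpha$-invariant function dominated by a potential, must vanish); note also that any attempt to bound the tail by $\kappa_\alpha\mu_s(x_0)-\int_{|x|\leqslant R}|x-x_0|^{\alpha-n}\,d\mu_s$ is circular, since its limit is exactly the quantity $u(x_0)-\kappa_\alpha\mu_0(x_0)$ you are trying to prove is zero.

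A second, more minor gap: from $\kappa_\alpha\mu_0\leqslant u$, equality Lebesgue-a.e., and lower semicontinuity of both sides, you conclude equality everywhere. That inference is false for general lsc pairs (compare $f=1-1_{\{0\}}$ with $g\equiv1$: both lsc, $f\leqslant g$, $f=g$ a.e., $f(0)\ne g(0)$). What rescues it here is the super-mean-value property of Riesz potentials for $\alpha\leqslant2$: $\kappa_\alpha\mu_0(x)$ dominates, and is recovered as the limit of, its spatial averages, so it is determined by its values a.e.\ near $x$; the same holds for $u$ as an increasing limit of such functions. This needs to be said, since it is precisely where the restriction $\alpha\leqslant2$ enters. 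For comparison, the paper does not prove the statement from scratch at all: it cites Landkof's Theorem 3.9 for sequences (whose proof runs through exactly these two points, via the mean-value characterization of potentials and the majorant) and notes that the argument extends to nets using Doob and Bourbaki.
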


\begin{proof}
 If $(\mu_s)_{s\in S}$ is a sequence, Theorem~\ref{3.9} is, in fact, \cite[Theorem~3.9]{L} (cf.\ also \cite{BrCh,Ca1}). The proof of \cite[Theorem~3.9]{L} can be generalized to the case where $(\mu_s)_{s\in S}$ is a net, by use of \cite[Appendix~VIII, Theorem~2]{Doob} and \cite[Section~IV.1, Theorem~1]{B2}.
\end{proof}

The property of the $\alpha$-Riesz kernels (of order $\alpha\in(0,2]$, $\alpha<n$), presented in the following theorem (see \cite[Theorems~1.27, 1.29]{L}), is known in the literature as {\it the complete maximum principle}; for $q=0$, it is also called  {\it the domination principle}, and for $\nu=0$, {\it the Frostman maximum principle}.

\begin{theorem}\label{dom}If\/ $\kappa_\alpha\mu\leqslant\kappa_\alpha\nu+q$ holds true $\mu$-a.e.\ {\rm(}$\mu$-almost everywhere\/{\rm)}, where\/ $\mu\in\mathcal E^+_\alpha$, $\nu\in\mathfrak M^+$, and\/ $q\in[0,\infty)$, then the same inequality is fulfilled on all of\/ $\mathbb R^n$.\end{theorem}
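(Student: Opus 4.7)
The plan is to reduce the problem to the case of compactly supported, finite-energy measures and then exploit the Hilbert-cone structure of $\mathcal E^+_\alpha$. Since $\mu\in\mathcal E^+_\alpha$ the measure $\mu$ is $c_\alpha$-absolutely continuous, so $\mu$-null subsets of any polar set can be freely ignored in what follows. The Riesz composition identity $\kappa_\alpha=\kappa_{\alpha/2}\ast\kappa_{\alpha/2}$ together with the perfectness of the kernel will be essential for converting pointwise potential inequalities into energy-type inequalities.

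First I would approximate. Set $\mu_k:=\mu|_{\overline B(0,k)}\in\mathcal E^+_\alpha$ and $\nu_k:=\nu|_{\overline B(0,k)}$. By Theorem~\ref{3.9}, $\kappa_\alpha\mu_k\uparrow\kappa_\alpha\mu$ and $\kappa_\alpha\nu_k\uparrow\kappa_\alpha\nu$ pointwise on $\mathbb R^n$. The hypothesis is inherited by $\mu_k$ against $\nu$, but replacing $\nu$ by $\nu_k$ requires absorbing the tail $\kappa_\alpha(\nu-\nu_k)$, which is bounded and continuous on $\overline B(0,k/2)$ for large $k$, into a slightly shifted constant $q_k\downarrow q$. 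Proving the assertion for $\mu_k$ against $\nu_k+q_k$ and letting $k\to\infty$, using lower semicontinuity together with Theorem~\ref{3.9}, reduces matters to the case $\mu,\nu\in\mathcal E^+_\alpha$ with compact support.

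In this reduced setting, form the strongly closed convex cone
\[\mathcal K:=\bigl\{\lambda\in\mathcal E^+_\alpha:\,\kappa_\alpha\lambda\leqslant\kappa_\alpha\nu+q\text{ everywhere on }\mathbb R^n\bigr\},\]
which is nonempty (it contains $0$) and strongly complete as a strongly closed subset of $\mathcal E^+_\alpha$. Let $\mu_0\in\mathcal K$ denote the strong projection of $\mu$ onto $\mathcal K$; existence and uniqueness follow from the Deny--Cartan completeness. The first-order variational inequality $\langle\mu-\mu_0,\lambda-\mu_0\rangle\leqslant 0$ for all $\lambda\in\mathcal K$, tested against perturbations $\mu_0+t\theta$ with $\theta\in\mathcal E^+_\alpha$ localized where $\kappa_\alpha\mu_0<\kappa_\alpha\nu+q$, yields $\kappa_\alpha\mu_0\geqslant\kappa_\alpha\mu$ nearly everywhere on the support of $\mu-\mu_0$. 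Integrating this against $\mu-\mu_0$ (using the $\mu$-a.e.\ hypothesis together with $\mu_0\in\mathcal K$) gives $\|\mu-\mu_0\|^2\leqslant 0$, so strict positive definiteness of $\kappa_\alpha$ forces $\mu=\mu_0\in\mathcal K$, which is the desired conclusion.

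The main obstacle I anticipate is the approximation step: tracking the tail $\kappa_\alpha(\nu-\nu_k)$ and faithfully promoting the $\mu$-a.e.\ hypothesis through the truncation must be done carefully so as not to lose information in the limit. The technical ingredients for this are Theorem~\ref{3.9}, the lower semicontinuity of Riesz potentials, and the fact that the strong topology on $\mathcal E^+_\alpha$ refines the vague topology, all of which are provided by the preliminaries above.
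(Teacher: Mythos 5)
The paper offers no proof of this theorem to compare against: it is imported verbatim from Landkof \cite[Theorems~1.27, 1.29]{L} as a known, deep property of the Riesz kernels of order $\alpha\leqslant2$. Your attempt to re-derive it from the Hilbert-space structure of $\mathcal E_\alpha$ has a decisive gap at the variational step. For $\mu_0+t\theta$ to belong to $\mathcal K$ you need $\kappa_\alpha\mu_0+t\,\kappa_\alpha\theta\leqslant\kappa_\alpha\nu+q$ \emph{everywhere} on $\mathbb R^n$; since $\kappa_\alpha\theta>0$ at every point of $\mathbb R^n$ no matter where $\theta$ is ``localized'', this fails for every $t>0$ at every point of the contact set $\{\kappa_\alpha\mu_0=\kappa_\alpha\nu+q\}$, which is nonempty precisely when $\mu_0\neq\mu$, i.e.\ in the only case needing an argument. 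So no nonzero perturbation is admissible, the inequality $\langle\mu-\mu_0,\theta\rangle\leqslant0$ is never obtained, the claimed estimate $\kappa_\alpha\mu\leqslant\kappa_\alpha\mu_0$ n.e.\ on the ``support of $\mu-\mu_0$'' (not a well-defined object for a signed measure) does not follow, and the final integration yielding $\|\mu-\mu_0\|^2\leqslant0$ is unsupported. The truncation step also has repairable defects ($\nu|_{\overline{B}(0,k)}$ need not have finite energy, and the tail $\kappa_\alpha(\nu-\nu_k)$ is controlled only on $\overline{B}(0,k/2)$ while $\mu_k$ charges all of $\overline{B}(0,k)$), but these are secondary.

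There is also a structural reason why no repair along these lines can succeed. Every tool you actually invoke --- strict positive definiteness, perfectness and strong completeness of $\mathcal E^+_\alpha$, lower semicontinuity of potentials, monotone approximation --- holds equally for Riesz kernels of order $2<\alpha<n$, for which the complete maximum principle (already the Frostman case $\nu=0$) is \emph{false}: for the uniform unit measure $\lambda$ on the sphere $\{|x|=1\}$ in $\mathbb R^3$ with $\alpha=5/2$, one computes $\kappa_\alpha\lambda\equiv2^{\alpha-2}/(\alpha-1)<1$ on $S(\lambda)$ while $\kappa_\alpha\lambda(0)=1$, and $\lambda$ is compactly supported with finite energy, so it lies squarely inside your ``reduced setting''. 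Any correct proof must therefore use $\alpha\leqslant2$ in an essential way ($\alpha$-superharmonicity, or the descent/balayage representation of the kernel onto the complement of a ball), which is exactly what the cited proof in \cite{L} does and what your argument nowhere does --- the composition identity is announced in your preamble but never actually used.
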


\section{Basic facts of the theory of inner $\alpha$-Riesz balayage}\label{subseq1}

The theory of inner $\alpha$-Riesz balayage of arbitrary $\mu\in\mathfrak{M}^+$ to arbitrary $A\subset\mathbb{R}^n$, originated by the author in \cite{Z-bal} (for $\alpha=2$, see the pioneering paper by Cartan \cite{Ca2}), has recently found a further development in \cite{Z-bal2}--\cite{Z-arx}. The present section as well as Sect.~\ref{sec-prel} describes some basic facts from \cite{Z-bal}--\cite{Z-arx}, useful for the understanding of the results of the current study and the methods applied.

Assume for a moment that a set $A:=F$ is {\it closed}, and that a measure $\mu:=\sigma$ is of {\it finite} energy, i.e.\ $\sigma\in\mathcal E^+_\alpha$. Based on the facts that the $\alpha$-Riesz kernel is perfect and satisfies the domination principle, one can prove by generalizing the classical Gauss variational method (see \cite{C0,Ca2}, cf.\ also \cite[Section~IV.5.23]{L}) that there exists $\sigma^F\in\mathcal E^+_F$ uniquely determined within $\mathcal E^+_F$ by the equality $\kappa_\alpha\sigma^F=\kappa_\alpha\sigma$ n.e.\ on $F$.
This $\sigma^F$ is said to be the $\alpha$-Riesz {\it balayage\/} of $\sigma\in\mathcal E^+_\alpha$ onto (closed) $F$, and it can alternatively be characterized as the orthogonal projection of $\sigma$ in the pre-Hil\-bert space $\mathcal E_\alpha$ onto the convex, strongly complete cone $\mathcal E^+_F$ (see Sect.~\ref{sec-prel'});
 that is,\footnote{Concerning the orthogonal projection in a pre-Hilbert space, see e.g.\ \cite[Theorem~1.12.3]{E2}.}
\begin{equation}\label{pr-cl}\|\sigma-\sigma^F\|=\min_{\nu\in\mathcal{E}^+_F}\,\|\sigma-\nu\|.\end{equation}

However, if $A$ is not closed, or if $\mu$ is of infinite energy, then there is in general no measure $\nu$ which would be uniquely determined within $\mathfrak{M}^+_A$ by the equality $\kappa_\alpha\nu=\kappa_\alpha\mu$ n.e.\ on $A$ (see e.g.\ Remark~\ref{rem-2}). Nevertheless, a substantial theory of {\it inner\/} $\alpha$-Riesz balayage of arbitrary $\mu\in\mathfrak{M}^+$ to arbitrary $A\subset\mathbb{R}^n$ was developed \cite{Z-bal}--\cite{Z-arx-22}, and this was performed by means of several alternative approaches described below.\footnote{The {\it outer\/} $\alpha$-Riesz balayage was investigated by J.~Bliedtner and W.~Hansen \cite{BH} in the general framework of balayage spaces. See also N.S.~Landkof \cite[Section~V.1.2]{L}, where, however, certain restrictions were imposed upon $A$ and $\mu$, e.g.\ that $A\subset\mathbb{R}^n$ be Borel while $\mu\in\mathfrak{M}^+$ be bounded.}

Given arbitrary $\mu\in\mathfrak{M}^+$ and $A\subset\mathbb{R}^n$, denote
\begin{equation}\label{gamma}
 \Gamma_{A,\mu}:=\bigl\{\nu\in\mathfrak{M}^+: \ \kappa\nu\geqslant\kappa\mu\text{ \ n.e.\ on $A$}\bigr\}.
\end{equation}
The class $\Gamma_{A,\mu}$ is obviously nonempty, for $\mu\in\Gamma_{A,\mu}$, and it is convex, the latter being clear from the following  strengthened version of countable subadditivity for inner capacity (see \cite[p.~253]{Ca2}, \cite[p.~158, Remark]{F1}; compare with \cite[Section~II.2.6]{L}).

\begin{lemma}\label{str}
For arbitrary\/ $A\subset\mathbb{R}^n$ and Borel\/ $B_j\subset\mathbb{R}^n$, $j\in\mathbb N$,
\[c_\alpha\Bigl(\bigcup_{j\in\mathbb N}\,A\cap B_j\Bigr)\leqslant\sum_{j\in\mathbb N}\,c_\alpha(A\cap B_j).\]\end{lemma}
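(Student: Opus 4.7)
The plan is to combine the inner equilibrium measures of the sets $A\cap B_j$ into a single ``test'' measure and pair it, via Fubini, against the equilibrium measure of a compact subset of $\bigcup_j(A\cap B_j)$. By inner regularity of the inner capacity (i.e.\ $c_\alpha(E)=\sup c_\alpha(K)$ taken over compact $K\subset E$), it suffices to prove
\[c_\alpha(K)\leqslant\sigma:=\sum_{j\in\mathbb N}c_\alpha(A\cap B_j)\]
for every compact $K\subset\bigcup_j(A\cap B_j)$; one may clearly assume $\sigma<\infty$. Disjointify by setting $K_j:=K\cap B_j\setminus\bigcup_{i<j}B_i$, which are pairwise disjoint Borel subsets of $K$ with $K=\bigsqcup_j K_j$ and $K_j\subset A\cap B_j$, so in particular $c_\alpha(K_j)\leqslant c_\alpha(A\cap B_j)$.

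Each $K_j$ is Borel of finite capacity (bounded by $c_\alpha(K)<\infty$), so the inner equilibrium measure $\gamma_{K_j}$ satisfying (\ref{fcap1})--(\ref{fcap2}) is available. Form
\[\gamma:=\sum_{j\in\mathbb N}\gamma_{K_j},\]
a positive Radon measure of total mass $\sum_j c_\alpha(K_j)\leqslant\sigma$. Monotone convergence yields $\kappa_\alpha\gamma\geqslant\kappa_\alpha\gamma_{K_j}$ on $\mathbb R^n$ for every $j$, so $\kappa_\alpha\gamma\geqslant 1$ on $K\setminus N$, where $N:=\bigcup_j N_j$ and $N_j:=K_j\cap\{\kappa_\alpha\gamma_{K_j}<1\}$. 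Each $N_j$ is Borel (since the potential $\kappa_\alpha\gamma_{K_j}$ is lower semicontinuous, hence Borel measurable) and satisfies $c_\alpha(N_j)=0$ by (\ref{fcap2}).

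The main point is to establish $c_\alpha(N)=0$. Disjointify again as $N=\bigsqcup_j\tilde N_j$ with $\tilde N_j:=N_j\setminus\bigcup_{i<j}N_i$; any $\lambda\in\mathcal E^+_\alpha$ concentrated on $N$ then decomposes as $\lambda=\sum_j\lambda|_{\tilde N_j}$, and each summand is a finite-energy positive measure concentrated on a subset of the inner-capacity-zero Borel set $N_j$, hence vanishes. Thus no nonzero measure of $\mathcal E^+_\alpha$ is concentrated on $N$, i.e.\ $c_\alpha(N)=0$. This ``countable union of capacity-zero Borel exceptional sets has capacity zero'' step is the principal obstacle; once it is in place the remainder is a standard Fubini computation.

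Finally, let $\gamma_K$ denote the inner equilibrium measure of the compact set $K$, so $\gamma_K\in\mathcal E^+_K$ with $\gamma_K(\mathbb R^n)=c_\alpha(K)$. Applying Theorem~\ref{dom} to $\gamma_K$ with $\nu=0$ and $q=1$ (noting that $\kappa_\alpha\gamma_K\leqslant 1$ holds $\gamma_K$-a.e.\ thanks to (\ref{fcap2}) and the $c_\alpha$-absolute continuity of $\gamma_K$) gives $\kappa_\alpha\gamma_K\leqslant 1$ on all of $\mathbb R^n$; and the same $c_\alpha$-absolute continuity together with $c_\alpha(N)=0$ yields $\gamma_K(N)=0$. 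Fubini then delivers
\[c_\alpha(K)=\gamma_K(K\setminus N)\leqslant\int\kappa_\alpha\gamma\,d\gamma_K=\int\kappa_\alpha\gamma_K\,d\gamma\leqslant\gamma(\mathbb R^n)\leqslant\sigma,\]
and passing to the supremum over compact $K\subset\bigcup_j(A\cap B_j)$ finishes the proof.
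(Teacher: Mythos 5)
Your proof is correct. Bear in mind, though, that the paper does not actually prove Lemma~\ref{str} — it quotes it from \cite[p.~253]{Ca2} and \cite[p.~158, Remark]{F1} — so there is no internal proof to compare with; what you have written is essentially the classical argument behind those references: reduce to a compact $K\subset\bigcup_j(A\cap B_j)$, disjointify into Borel pieces $K_j\subset A\cap B_j$, sum the equilibrium measures, and play the Frostman maximum principle (Theorem~\ref{dom} with $\nu=0$, $q=1$) off against Fubini--Tonelli. The individual steps all check out: $K_j\subset A\cap B_j$ does hold, since every $x\in K$ lies in some $A\cap B_i$ and hence in $A$; the exceptional set $N=\bigcup_j N_j$ is handled correctly, because each $N_j$ is Borel and the paper's observation in Sect.~\ref{sec-prel'} (a $\mu$-measurable set of inner capacity zero is $\mu$-negligible for $\mu\in\mathcal E^+_\alpha$) forces $\lambda(N)=\sum_j\lambda(\tilde N_j)=0$ for every $\lambda\in\mathcal E^+_\alpha$ concentrated on $N$, whence $\mathcal E^+_N=\{0\}$ and $c_\alpha(N)=0$ by the homogeneity remark in Sect.~\ref{sec-prel'}. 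The one ingredient you invoke without justification is the inner regularity $c_\alpha(E)=\sup\{c_\alpha(K):K\subset E\ \text{compact}\}$; with definition (\ref{capp}) this is not literally the definition, but it is a standard fact (\cite[Lemma~2.3.3]{F1}) and follows by restricting a near-minimizing unit measure $\mu\in\mathcal E^+_E$ to large compact subsets of $E$ and using the inner regularity of Radon measures. In short: a correct, self-contained proof of a statement the paper merely cites.
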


\begin{definition}\label{def-bal} The {\it inner balayage\/} $\mu^A$ of $\mu\in\mathfrak{M}^+$ to $A\subset\mathbb{R}^n$ is defined as the measure of minimum potential in the class $\Gamma_{A,\mu}$, that is, $\mu^A\in\Gamma_{A,\mu}$ and
\begin{equation}\label{e-d}\kappa_\alpha\mu^A=\min_{\nu\in\Gamma_{A,\mu}}\,\kappa_\alpha\nu\text{ \ on $\mathbb R^n$}.\end{equation}
\end{definition}

This definition is in agreement with Cartan's classical concept of inner Newtonian balayage (cf.\ \cite[Section~19, Theorem~1]{Ca2}). Nonetheless, the results presented below are largely new even for the Newtonian kernel $|x-y|^{2-n}$ on $\mathbb R^n$, $n\geqslant3$.

Denote by $\mathcal{E}'_A$ the closure of $\mathcal{E}^+_A$ in the strong topology on $\mathcal{E}_\alpha^+$. The class $\mathcal{E}'_A$ is convex, for so is $\mathcal{E}^+_A$; and it is strongly complete, being a strongly closed subset of the strongly complete cone $\mathcal{E}^+_\alpha$ (cf.\ Sect.~\ref{sec-prel'}).

\begin{theorem}\label{th-bal}Given arbitrary\/ $\mu\in\mathfrak{M}^+$ and\/ $A\subset\mathbb{R}^n$, the inner balayage\/ $\mu^A$, introduced by Definition\/~{\rm\ref{def-bal}}, exists and is unique. Furthermore,\footnote{Relation~(\ref{ineq1}) actually holds true everywhere on $A^r$, $A^r\subset\overline{A}$ being the set of all inner $\alpha$-re\-gu\-lar points for $A$ (see (\ref{reg-pot}), cf.\ also (\ref{KE})).}
\begin{align}\label{ineq1}\kappa_\alpha\mu^A&=\kappa_\alpha\mu\text{ \ n.e.\ on\ }A,\\
\label{ineq2}\kappa_\alpha\mu^A&\leqslant\kappa_\alpha\mu\text{ \ on\ }\mathbb R^n.
\end{align}
The inner balayage\/ $\mu^A$ can alternatively be characterized by means of either of the following\/ {\rm(}equivalent\/{\rm)} assertions:
\begin{itemize}
  \item[{\rm(a)}] $\mu^A$ is the unique measure in\/ $\mathfrak{M}^+$ satisfying the symmetry relation
  \begin{equation*}\label{eq-sym}
    \kappa_\alpha(\mu^A,\sigma)=\kappa_\alpha(\sigma^A,\mu)\text{ \ for all\/ $\sigma\in\mathcal{E}_\alpha^+$},
  \end{equation*}
  where\/ $\sigma^A$ denotes the only measure in\/ $\mathcal{E}'_A$ with\/ $\kappa_\alpha\sigma^A=\kappa_\alpha\sigma$ n.e.\ on\/ $A$.\footnote{For any $\sigma\in\mathcal{E}^+_\alpha$ and any $A\subset\mathbb{R}^n$, the measure $\sigma^A\in\mathcal{E}'_A$ having the property $\kappa_\alpha\sigma^A=\kappa_\alpha\sigma$ n.e.\ on $A$, exists and is unique. It is in fact the orthogonal projection of $\sigma$ in the pre-Hil\-bert space $\mathcal{E}_\alpha$ onto the convex, strongly complete cone $\mathcal{E}'_A$; that is (compare with (\ref{pr-cl})),
  \[\|\sigma-\sigma^A\|=\min_{\nu\in\mathcal{E}'_A}\,\|\sigma-\nu\|.\]
  Alternatively, $\sigma^A$ is uniquely characterized within $\mathfrak{M}^+$ by the extremal property (\ref{e-d}) with $\mu:=\sigma$.\label{Fo1}}
  \item[{\rm(b)}] $\mu^A$ is the unique measure in\/ $\mathfrak{M}^+$ satisfying either of the two limit relations
  \begin{gather*}\mu_j^A\to\mu^A\text{ \ vaguely in\/ $\mathfrak{M}^+$ as\/ $j\to\infty$},\\
\kappa_\alpha\mu_j^A\uparrow\kappa_\alpha\mu^A\text{ \ pointwise on\/ $\mathbb R^n$ as\/ $j\to\infty$},
\end{gather*}
where\/ $(\mu_j)\subset\mathcal{E}^+_\alpha$ is an arbitrary sequence having the property\/\footnote{Such $\mu_j\in\mathcal{E}^+_\alpha$, $j\in\mathbb N$, do exist; they can be defined, for instance, by means of the formula
\[\kappa_\alpha\mu_j:=\min\,\bigl\{\kappa_\alpha\mu,\,j\kappa_\alpha\lambda\bigr\},\]
$\lambda\in\mathcal{E}^+_\alpha$ being fixed (see e.g.\ \cite[p.~272]{L} or \cite[p.~257, footnote]{Ca2}). Here we have used the fact that for any $\mu_1,\mu_2\in\mathfrak{M}^+$, there is $\mu_0\in\mathfrak{M}^+$ such that $\kappa_\alpha\mu_0:=\min\,\{\kappa_\alpha\mu_1,\,\kappa_\alpha\mu_2\}$ \cite[Theorem~1.31]{L}.}
\begin{equation}\label{eq-mon}\kappa_\alpha\mu_j\uparrow\kappa_\alpha\mu\text{ \ pointwise on\/ $\mathbb{R}^n$ as\/ $j\to\infty$},\end{equation}
whereas\/ $\mu_j^A$ denotes the only measure in\/ $\mathcal{E}'_A$ with\/ $\kappa_\alpha\mu_j^A=\kappa_\alpha\mu_j$ n.e.\ on\/ $A$. {\rm(Regarding the existence and uniqueness of this $\mu_j^A$, see footnote~\ref{Fo1})}.
\end{itemize}
\end{theorem}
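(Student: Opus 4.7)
The plan is a two-stage construction of $\mu^A$: first for measures of finite energy via orthogonal projection in the pre-Hilbert space $\mathcal{E}_\alpha$, then for general $\mu \in \mathfrak{M}^+$ via monotone approximation together with Theorem~\ref{3.9}.

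\emph{Stage 1 (finite energy).} For $\sigma \in \mathcal{E}^+_\alpha$, I would define $\sigma^A$ as the orthogonal projection of $\sigma$ onto the convex cone $\mathcal{E}'_A$, which is strongly complete as a strongly closed subset of $\mathcal{E}^+_\alpha$. Existence and uniqueness follow from standard pre-Hilbert theory. The resulting variational inequality $\langle \sigma - \sigma^A, \nu - \sigma^A \rangle \leqslant 0$ for all $\nu \in \mathcal{E}'_A$, together with the perfectness of the $\alpha$-Riesz kernel, pins down $\kappa_\alpha \sigma^A = \kappa_\alpha \sigma$ n.e.\ on $A$. Since $\sigma^A \in \mathcal{E}^+_\alpha$ is $c_\alpha$-absolutely continuous, Theorem~\ref{dom} upgrades this to $\kappa_\alpha \sigma^A \leqslant \kappa_\alpha \sigma$ on all of $\mathbb R^n$. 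For the extremality, an arbitrary $\nu \in \Gamma_{A,\sigma}$ satisfies $\kappa_\alpha \nu \geqslant \kappa_\alpha \sigma = \kappa_\alpha \sigma^A$ n.e.\ on $A$, and another application of Theorem~\ref{dom} (to the $c_\alpha$-absolutely continuous $\sigma^A$, essentially concentrated on $A$) yields $\kappa_\alpha \nu \geqslant \kappa_\alpha \sigma^A$ globally.

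\emph{Stage 2 (general $\mu$).} Choose $(\mu_j) \subset \mathcal{E}^+_\alpha$ with $\kappa_\alpha \mu_j \uparrow \kappa_\alpha \mu$ pointwise, and form $\mu_j^A$ as in Stage~1. Since $\kappa_\alpha \mu_{j+1}^A = \kappa_\alpha \mu_{j+1} \geqslant \kappa_\alpha \mu_j$ n.e.\ on $A$, one has $\mu_{j+1}^A \in \Gamma_{A,\mu_j}$, so Stage~1 extremality forces $\kappa_\alpha \mu_j^A \leqslant \kappa_\alpha \mu_{j+1}^A$ on $\mathbb R^n$; the sequence $(\kappa_\alpha \mu_j^A)$ is thus pointwise increasing and dominated by $\kappa_\alpha \mu$. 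Theorem~\ref{3.9} then produces $\mu^A \in \mathfrak{M}^+$ with $\mu_j^A \to \mu^A$ vaguely and $\kappa_\alpha \mu_j^A \uparrow \kappa_\alpha \mu^A$ on $\mathbb R^n$. Passing to the pointwise limit in the n.e.\ identity $\kappa_\alpha \mu_j^A = \kappa_\alpha \mu_j$ on $A$ and invoking Lemma~\ref{str} yields (\ref{ineq1}); the limit of $\kappa_\alpha \mu_j^A \leqslant \kappa_\alpha \mu_j$ gives (\ref{ineq2}). For extremality, any $\nu \in \Gamma_{A,\mu}$ satisfies $\kappa_\alpha \nu \geqslant \kappa_\alpha \mu_j$ n.e.\ on $A$, hence $\kappa_\alpha \nu \geqslant \kappa_\alpha \mu_j^A$ on $\mathbb R^n$ by Stage~1, and letting $j \to \infty$ yields $\kappa_\alpha \nu \geqslant \kappa_\alpha \mu^A$. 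Uniqueness of $\mu^A$ in $\Gamma_{A,\mu}$ follows from the strict positive definiteness of $\kappa_\alpha$, which also shows the construction to be independent of $(\mu_j)$ and hence delivers characterization~(b).

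For (a), the finite-energy case is immediate from orthogonality: $\langle \mu - \mu^A, \sigma^A \rangle = 0$ and $\langle \sigma - \sigma^A, \mu^A \rangle = 0$, whence $\kappa_\alpha(\mu, \sigma^A) = \kappa_\alpha(\mu^A, \sigma^A) = \kappa_\alpha(\mu^A, \sigma)$. The general case follows by writing the symmetry relation for each $\mu_j$ and passing to the limit via Fubini and monotone convergence on both sides (the right-hand side uses $\kappa_\alpha \mu_j \uparrow \kappa_\alpha \mu$, the left-hand side uses $\kappa_\alpha \mu_j^A \uparrow \kappa_\alpha \mu^A$). Uniqueness of $\mu^A$ under the symmetry for all $\sigma \in \mathcal{E}^+_\alpha$ follows because a sufficiently rich family of such test $\sigma$ determines $\kappa_\alpha \mu^A$ pointwise. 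I expect the main obstacle to be Stage~1: combining the Hilbert-space projection with the domination principle to simultaneously secure $\kappa_\alpha \sigma^A = \kappa_\alpha \sigma$ n.e.\ on $A$, the global inequality (\ref{ineq2}), and extremality against every competitor in $\Gamma_{A,\sigma}$. Once this is in hand, the passage to arbitrary $\mu \in \mathfrak{M}^+$ is a routine monotone-limit argument powered by Theorem~\ref{3.9}.
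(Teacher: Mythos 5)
Your two-stage blueprint (orthogonal projection onto $\mathcal{E}'_A$ for finite-energy measures, then monotone approximation via Theorem~\ref{3.9}) is exactly the route the paper attributes to \cite{Z-bal,Z-bal2}, and Stage~2 and the derivation of (a) are essentially sound as sketched. However, Stage~1 contains a genuine gap at the very point you identify as the main obstacle. To pass from $\kappa_\alpha\sigma^A=\kappa_\alpha\sigma$ n.e.\ on $A$ to the global inequality (\ref{ineq2}), and again in your extremality argument against $\nu\in\Gamma_{A,\sigma}$, you invoke Theorem~\ref{dom} with $\sigma^A$ in the role of the finite-energy measure; but that theorem requires the hypothesis to hold $\sigma^A$-a.e., and you justify this by saying $\sigma^A$ is ``essentially concentrated on $A$.'' That is false for non-closed $A$: by Remark~\ref{rem-1}, for $A=B_r$ open and $\sigma$ carried by $B_r^c$ one has $S(\sigma^{B_r})=\{|x|=r\}$, so $S(\sigma^A)\cap A=\varnothing$. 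A measure in $\mathcal{E}'_A$ is carried only by $\overline{A}$, and the $c_\alpha$-absolute continuity of $\sigma^A$ converts ``n.e.\ on $A$'' into ``$\sigma^A$-a.e.\ on $A$'' but says nothing about the mass of $\sigma^A$ sitting on $\overline{A}\setminus A$, where the sign of $\kappa_\alpha\sigma-\kappa_\alpha\sigma^A$ is not yet controlled. (Extending the equality from $A$ to all of $A^r$ would close the loop, but that is the Kellogg--Evans/Wiener machinery of (\ref{reg-pot})--(\ref{KE}), which is built on top of the balayage theory, not available at this stage.)

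The standard repair, and the one the cited development actually uses, is to interpose the compact (or closed) case: for $K$ compact, $\mathcal{E}'_K=\mathcal{E}^+_K$, so $\sigma^K$ \emph{is} concentrated on $K$ and your application of Theorem~\ref{dom} is legitimate there, yielding $\kappa_\alpha\sigma^K\leqslant\kappa_\alpha\sigma$ everywhere and the extremal property within $\Gamma_{K,\sigma}$. One then shows $\sigma^K\to\sigma^A$ strongly and $\kappa_\alpha\sigma^K\uparrow\kappa_\alpha\sigma^A$ as $K\uparrow A$ (cf.\ Theorem~\ref{bal-cont}, again via Theorem~\ref{3.9}, using that $\bigcup_{K\subset A}\mathcal{E}^+_K$ is strongly dense in $\mathcal{E}'_A$), and (\ref{ineq1}), (\ref{ineq2}) and the extremality for arbitrary $A$ are obtained by passage to the limit, with Lemma~\ref{str} absorbing the countably many exceptional sets. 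With this extra exhaustion layer inserted before your Stage~2, the rest of your argument goes through.
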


\begin{remark}\label{rem-1}
  The inner balayage $\mu^A$ is in general {\it not\/} concentrated on the set $A$ itself, but on its closure $\overline{A}$, and this occurs even for the Newtonian kernel on $\mathbb R^n$, $n\geqslant3$, and $A:=B_r:=\{|x|<r\}$, $r\in(0,\infty)$. Indeed, for any $\mu\in\mathfrak M^+_{\overline{B}_r^c}$, we have $S(\mu^{B_r})=\{|x|=r\}$ (see \cite[Theorems~4.1, 5.1]{Z-bal2}), and hence actually
  $S(\mu^{B_r})\cap B_r=\varnothing$.
\end{remark}

\begin{remark}\label{rem-2}Assume for a moment that $\mu\in\mathcal E^+_\alpha$. As noted in Theorem~\ref{th-bal} and footnote~\ref{Fo1},  the inner balayage $\mu^A$ is then the only measure in $\mathcal{E}'_A$ satisfying (\ref{ineq1}). This in turn implies that there is in general no $\nu\in\mathcal E^+_A$ with $\kappa_\alpha\nu=\kappa_\alpha\mu$ n.e.\ on $A$. Indeed, if there were such $\nu$, then it would necessarily serve as $\mu^A$; which however is in general impossible, for $\mu^A$ may not be concentrated on $A$ (see Remark~\ref{rem-1}).\end{remark}

\begin{corollary}\label{cor-bal}For any\/ $\mu\in\mathfrak M^+$ and any\/ $A\subset\mathbb R^n$, the inner balayage\/ $\mu^A$ is of minimum total mass in the class\/ $\Gamma_{A,\mu}$, that is,
\begin{equation}\label{eq-t-m}\mu^A(\mathbb R^n)=\min_{\nu\in\Gamma_{A,\mu}}\,\nu(\mathbb R^n).\end{equation}\end{corollary}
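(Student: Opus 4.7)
The plan is short, because all the heavy lifting has already been done: the content of Corollary~\ref{cor-bal} follows from Definition~\ref{def-bal} combined with the classical Deny principle (Theorem~\ref{th-fz}). The easy direction of (\ref{eq-t-m}) is immediate: Definition~\ref{def-bal} requires $\mu^A\in\Gamma_{A,\mu}$, so $\mu^A(\mathbb R^n)\geqslant\inf_{\nu\in\Gamma_{A,\mu}}\nu(\mathbb R^n)$, and if equality holds then the infimum is attained at $\mu^A$. Hence the whole task reduces to proving $\mu^A(\mathbb R^n)\leqslant\nu(\mathbb R^n)$ for every $\nu\in\Gamma_{A,\mu}$.

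For this, I fix an arbitrary $\nu\in\Gamma_{A,\mu}$. The defining extremal property (\ref{e-d}) asserts that $\kappa_\alpha\mu^A$ equals the pointwise minimum of the potentials $\kappa_\alpha\eta$ over $\eta\in\Gamma_{A,\mu}$. Thus
\[
\kappa_\alpha\mu^A(x)\leqslant\kappa_\alpha\nu(x)\quad\text{for every } x\in\mathbb R^n,
\]
not merely n.e.\ on $A$. This is exactly the hypothesis of Deny's classical principle of positivity of mass stated as Theorem~\ref{th-fz}, so applying that result yields $\mu^A(\mathbb R^n)\leqslant\nu(\mathbb R^n)$. Passing to the infimum over $\nu\in\Gamma_{A,\mu}$ and combining with the trivial direction gives (\ref{eq-t-m}), with the minimum attained by $\mu^A$ itself.

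The only point to double-check before invoking Theorem~\ref{th-fz} is that both $\mu^A$ and $\nu$ genuinely belong to $\mathfrak{M}^+$ in the sense of the excerpt, i.e.\ that condition (\ref{main}) is fulfilled and the potentials are not identically $+\infty$. For $\nu$ this is built into the definition of $\Gamma_{A,\mu}$ in (\ref{gamma}); for $\mu^A$ it is part of the conclusion of Theorem~\ref{th-bal}. So there is no obstacle, and no appeal to the stronger Theorem~\ref{th1} is required: this corollary depends only on the classical form of Deny's principle and on the existence statement in Theorem~\ref{th-bal}, which is precisely why it can be placed immediately after the latter.
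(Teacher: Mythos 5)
Your proposal is correct and follows essentially the same route as the paper: since $\mu^A\in\Gamma_{A,\mu}$ by definition, one only needs $\mu^A(\mathbb R^n)\leqslant\nu(\mathbb R^n)$ for each $\nu\in\Gamma_{A,\mu}$, and this is obtained from the pointwise inequality $\kappa_\alpha\mu^A\leqslant\kappa_\alpha\nu$ on all of $\mathbb R^n$ (the extremal property in Definition~\ref{def-bal}) together with the classical principle of positivity of mass, Theorem~\ref{th-fz}. Your additional check that $\mu^A$ and $\nu$ lie in $\mathfrak M^+$ is a harmless elaboration of what the paper leaves implicit.
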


\begin{proof}
  Since $\mu^A\in\Gamma_{A,\mu}$, we only need to show that
  $\mu^A(\mathbb R^n)\leqslant\nu(\mathbb R^n)$  for all $\nu\in\Gamma_{A,\mu}$, which however follows directly from definition (\ref{gamma}) by use of
  the (classical) principle of positivity of mass (see Theorem~\ref{th-fz}).
\end{proof}

\begin{remark}\label{t-m-nonun} However, the extremal property (\ref{eq-t-m}) cannot serve as an alternative characterization of inner balayage, for it does not determine $\mu^A$ uniquely within $\Gamma_{A,\mu}$. Indeed, consider a closed proper subset $A$ of $\mathbb R^n$ that is not $\alpha$-thin at infinity (take, for instance, $A:=\{|x|\geqslant 1\}$). Then for any $\mu\in\mathfrak M^+_{A^c}$,
\begin{equation}\label{eq-t-m1}\mu^A\ne\mu\text{ \ and \ }\mu^A(\mathbb R^n)=\mu(\mathbb R^n),\end{equation}
the former relation being obvious, and the latter following from Theorem~\ref{th-th}(iii).
Noting that $\mu,\mu^A\in\Gamma_{A,\mu}$ while $\Gamma_{A,\mu}$ is convex, we conclude by combining (\ref{eq-t-m}) with (\ref{eq-t-m1}) that there are actually in $\Gamma_{A,\mu}$ infinitely many measures of minimum total mass, for so is every measure of the form $a\mu+b\mu^A$, where $a,b\in[0,1]$ and $a+b=1$.
\end{remark}

\begin{corollary}\label{cor-sym}Given\/ $\mu\in\mathfrak M^+$ and\/ $A\subset\mathbb R^n$,
\begin{equation}\label{alt}
\kappa_\alpha(\mu^A,\nu)=\kappa_\alpha(\mu,\nu^A)\text{ \ for all\/ $\nu\in\mathfrak M^+$}.
\end{equation}
\end{corollary}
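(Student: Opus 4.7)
The approach is to bootstrap from the finite-energy symmetry relation in Theorem~\ref{th-bal}(a) to arbitrary $\nu\in\mathfrak{M}^+$ by monotone approximation, exploiting the characterization of inner balayage as a monotone limit of balayages of approximants, given in Theorem~\ref{th-bal}(b). Fix $\mu,\nu\in\mathfrak{M}^+$ and $A\subset\mathbb{R}^n$, and choose (by the footnote attached to Theorem~\ref{th-bal}(b)) a sequence $(\nu_j)\subset\mathcal{E}_\alpha^+$ with $\kappa_\alpha\nu_j\uparrow\kappa_\alpha\nu$ pointwise on $\mathbb{R}^n$; then $\kappa_\alpha\nu_j^A\uparrow\kappa_\alpha\nu^A$ pointwise on $\mathbb{R}^n$ by the second limit relation in Theorem~\ref{th-bal}(b).

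For each $j$, since $\nu_j\in\mathcal{E}_\alpha^+$, Theorem~\ref{th-bal}(a) applied with the roles $\sigma:=\nu_j$ yields
\[
\kappa_\alpha(\mu^A,\nu_j)=\kappa_\alpha(\nu_j^A,\mu)=\kappa_\alpha(\mu,\nu_j^A),
\]
the second equality being the symmetry of the kernel in the mutual energy (equivalently, Fubini's theorem). I would then pass to the limit $j\to\infty$ in both sides. Writing each mutual energy as an integral of a potential and using Fubini's theorem, we have $\kappa_\alpha(\mu^A,\nu_j)=\int\kappa_\alpha\nu_j\,d\mu^A$ and $\kappa_\alpha(\mu,\nu_j^A)=\int\kappa_\alpha\nu_j^A\,d\mu$. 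Since the integrands are nonnegative and increase pointwise to $\kappa_\alpha\nu$ and $\kappa_\alpha\nu^A$ respectively, the monotone convergence theorem gives
\[
\lim_{j\to\infty}\kappa_\alpha(\mu^A,\nu_j)=\int\kappa_\alpha\nu\,d\mu^A=\kappa_\alpha(\mu^A,\nu),
\]
\[
\lim_{j\to\infty}\kappa_\alpha(\mu,\nu_j^A)=\int\kappa_\alpha\nu^A\,d\mu=\kappa_\alpha(\mu,\nu^A),
\]
and combining the two chains of equalities yields (\ref{alt}).

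The only potential obstacle is verifying that Theorem~\ref{th-bal}(a) applies in the symmetric form I use: namely, that the balayage $\nu_j^A$ of the finite-energy measure $\nu_j$ (in the sense of Definition~\ref{def-bal}) coincides with the measure denoted $\sigma^A$ in Theorem~\ref{th-bal}(a). This identification is exactly what is stated in the final sentence of footnote~\ref{Fo1}, which asserts that for $\sigma\in\mathcal{E}_\alpha^+$ the orthogonal-projection measure $\sigma^A\in\mathcal{E}'_A$ is uniquely characterized within $\mathfrak{M}^+$ by extremal property~(\ref{e-d}) with $\mu:=\sigma$; consequently the two notations agree, the symmetry identity of Theorem~\ref{th-bal}(a) applies, and the argument closes. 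No further delicate estimates are needed, so the proof is quite short.
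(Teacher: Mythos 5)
Your proof is correct and follows essentially the same route as the paper's: approximate one of the two measures by a sequence in $\mathcal{E}_\alpha^+$ with increasing potentials, apply the finite-energy symmetry relation of Theorem~\ref{th-bal}(a) to each approximant, and pass to the limit via Theorem~\ref{th-bal}(b) and the monotone convergence theorem. The only (immaterial) difference is that you approximate $\nu$ where the paper approximates $\mu$, the roles being symmetric.
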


\begin{proof}
   Fix $\mu,\nu\in\mathfrak M^+$, and choose a sequence $(\mu_j)\subset\mathcal{E}^+_\alpha$ satisfying (\ref{eq-mon}). By Theorem~\ref{th-bal} (see (a) and (b)), $(\kappa_\alpha\mu_j^A)$ increases pointwise on $\mathbb R^n$ to $\kappa_\alpha\mu^A$, whereas
 \[\int\kappa_\alpha\mu_j^A\,d\nu=\int\kappa_\alpha\mu_j\,d\nu^A\text{ \ for all\ $j$}.\]
 Applying the monotone convergence theorem \cite[Section~IV.1, Theorem~3]{B2} to each of these two integrals, we obtain (\ref{alt}).
\end{proof}

\begin{remark}It follows from Theorem~\ref{th-bal}(a) and  Corollary~\ref{cor-sym} that, if for a given $\mu\in\mathfrak M^+$, there exists $\zeta\in\mathfrak M^+$ having the property
\[\kappa_\alpha(\zeta,\nu)=\kappa_\alpha(\mu,\nu^A)\text{ \  for all $\nu\in\mathfrak M^+$,}\]
then necessarily $\zeta=\mu^A$. Actually, this characteristic property of the inner balayage $\mu^A$ needs only to be verified for certain countably many $\nu_j\in\mathcal E^+_\alpha$ that are independent of the choice of $\mu\in\mathfrak M^+$.\footnote{This result has recently been extended to inner balayage on a locally compact space, see \cite{Z-arx}.} This is implied by the fact that there are countably many $\nu_j\in\mathcal E^+_\alpha$ whose potentials $\kappa_\alpha\nu_j$ form a dense subset of $C_0(\mathbb R^n)$ (\cite[Lemmas~3.1, 3.2]{Z-bal2}).
\end{remark}

Given $A\subset\mathbb R^n$, denote by $\mathfrak C_A$ the upward directed set of all compact subsets $K$ of $A$, where $K_1\leqslant K_2$ if and only if $K_1\subset K_2$. If a net $(x_K)_{K\in\mathfrak C_A}\subset Y$ converges to $x_0\in Y$, $Y$ being a topological space, then we shall indicate this fact by writing
\begin{equation*}\label{abr}x_K\to x_0\text{ \ in $Y$ as $K\uparrow A$}.\end{equation*}

The following theorem (cf.\ \cite[Theorem~4.5]{Z-bal}), analyzing the convergence of inner swept measures and their potentials under the exhaustion of $A\subset\mathbb R^n$ by compact subsets $K\subset A$, justifies the term `inner' balayage.

\begin{theorem}\label{bal-cont}
  For any\/ $\mu\in\mathfrak M^+$ and any\/ $A\subset\mathbb R^n$,
  \begin{gather*}
    \mu^K\to\mu^A\text{ \ vaguely in\/ $\mathfrak M^+$ as\/ $K\uparrow A$},\\
    \kappa_\alpha\mu^K\uparrow\kappa_\alpha\mu^A\text{ \ pointwise on\/ $\mathbb R^n$ as\/ $K\uparrow A$}.
  \end{gather*}
  If now\/ $\mu\in\mathcal E^+_\alpha$, then also
  \[\mu^K\to\mu^A\text{ \ strongly in\/ $\mathcal E^+_\alpha$ as\/ $K\uparrow A$}.\]
\end{theorem}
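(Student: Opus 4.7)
The plan is to establish the monotonicity of $(\mu^K)_{K\in\mathfrak{C}_A}$, apply Theorem~\ref{3.9} to extract a vague/pointwise limit $\mu_0$, identify $\mu_0=\mu^A$ via the minimality in Definition~\ref{def-bal}, and finally (for $\mu\in\mathcal E^+_\alpha$) obtain strong convergence by computing energy norms with the orthogonal-projection identities from footnote~\ref{Fo1} combined with monotone convergence.

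For monotonicity I would observe that if $K_1\subset K_2$ are compact subsets of $A$, then $\Gamma_{K_2,\mu}\subset\Gamma_{K_1,\mu}$, since `n.e.\ on $K_2$' implies `n.e.\ on $K_1$' trivially. Minimizing $\kappa_\alpha\nu$ over the smaller class in (\ref{e-d}) produces a larger infimum, so $\kappa_\alpha\mu^{K_1}\leqslant\kappa_\alpha\mu^{K_2}$ pointwise on $\mathbb R^n$; the same argument gives $\kappa_\alpha\mu^K\leqslant\kappa_\alpha\mu^A$ for every $K\in\mathfrak{C}_A$. Hence $(\kappa_\alpha\mu^K)$ is increasing and majorized by $\kappa_\alpha\mu^A$, and Theorem~\ref{3.9} supplies $\mu_0\in\mathfrak{M}^+$ with $\kappa_\alpha\mu^K\uparrow\kappa_\alpha\mu_0$ pointwise on $\mathbb R^n$ and $\mu^K\to\mu_0$ vaguely. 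To identify $\mu_0=\mu^A$, note first that $\kappa_\alpha\mu_0\leqslant\kappa_\alpha\mu^A$ from the limit. For the reverse, (\ref{ineq1}) gives $\kappa_\alpha\mu^K=\kappa_\alpha\mu$ n.e.\ on $K$, so $\kappa_\alpha\mu_0\geqslant\kappa_\alpha\mu$ n.e.\ on every $K\in\mathfrak{C}_A$. If the set $E:=\{x\in A:\kappa_\alpha\mu_0(x)<\kappa_\alpha\mu(x)\}$ had $c_\alpha(E)>0$, definition (\ref{capp}) would yield a compact $K'\subset E\subset A$ with $c_\alpha(K')>0$, forcing a subset of $K'$ of positive capacity to lie in $E$---contradicting that the exceptional set within $K'$ is of inner capacity zero. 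Therefore $c_\alpha(E)=0$, i.e.\ $\mu_0\in\Gamma_{A,\mu}$, and the minimality of $\mu^A$ in (\ref{e-d}) forces $\kappa_\alpha\mu^A\leqslant\kappa_\alpha\mu_0$. Equality of potentials together with the uniqueness assertion in Theorem~\ref{th-bal} delivers $\mu_0=\mu^A$.

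When $\mu\in\mathcal E^+_\alpha$, both $\mu^K$ and $\mu^A$ lie in $\mathcal E^+_\alpha$ as orthogonal projections of $\mu$ onto the strongly complete cones $\mathcal E^+_K=\mathcal E'_K$ and $\mathcal E'_A$ (cf.\ footnote~\ref{Fo1}); in particular they are $c_\alpha$-absolutely continuous. The projection identity $(\mu-\mu^B,\mu^B)=0$ for $B\in\{K,A\}$ gives $\|\mu^B\|^2=\kappa_\alpha(\mu,\mu^B)$. Moreover, $\mu^K\in\mathcal E'_K\subset\mathcal E'_A$ implies $(\mu^K)^A=\mu^K$ by the uniqueness in footnote~\ref{Fo1}, so Corollary~\ref{cor-sym} yields $\kappa_\alpha(\mu^A,\mu^K)=\kappa_\alpha(\mu,(\mu^K)^A)=\kappa_\alpha(\mu,\mu^K)=\|\mu^K\|^2$. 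Expanding,
\[\|\mu^K-\mu^A\|^2=\|\mu^K\|^2-2\kappa_\alpha(\mu^K,\mu^A)+\|\mu^A\|^2=\|\mu^A\|^2-\|\mu^K\|^2.\]
Applying monotone convergence to $\kappa_\alpha\mu^K\uparrow\kappa_\alpha\mu^A$ integrated against $\mu$ gives $\|\mu^K\|^2\uparrow\|\mu^A\|^2$, and the right-hand side tends to $0$, as desired. The main obstacle is the identification step: promoting the compact-level information `$\kappa_\alpha\mu^K=\kappa_\alpha\mu$ n.e.\ on $K$' to an n.e.\ inequality on the possibly non-Borel set $A$. The crux is that (\ref{capp}) defines inner capacity via compact subsets, so any set of positive inner capacity contains a compact of positive capacity, precisely enabling the contradiction above.
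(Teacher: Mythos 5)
Your proposal is correct. Note, however, that the paper itself gives no proof of Theorem~\ref{bal-cont}: it is stated with a pointer to \cite[Theorem~4.5]{Z-bal}, so there is no in-paper argument to compare against; your reconstruction uses exactly the toolkit the paper summarizes and is essentially the standard route taken in that reference. All the key steps check out: the antitonicity $\Gamma_{K_2,\mu}\subset\Gamma_{K_1,\mu}$ for $K_1\subset K_2$ together with (\ref{e-d}) gives the monotone, majorized net needed for Theorem~\ref{3.9}; the passage from ``n.e.\ on every compact $K\subset A$'' to ``n.e.\ on $A$'' via a compact subset of positive capacity is legitimate, since (\ref{capp}) and the equivalence $c_\alpha(E)=0\iff\mathcal E^+_K=\{0\}$ for every compact $K\subset E$ show that any set of positive inner capacity contains a compact set of positive capacity; and the identification $\mu_0=\mu^A$ then follows from the minimality and uniqueness in Definition~\ref{def-bal} and Theorem~\ref{th-bal}. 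For the strong convergence, the orthogonality relation $\|\mu^B\|^2=\kappa_\alpha(\mu,\mu^B)$ from footnote~\ref{Fo1}, the observation $(\mu^K)^A=\mu^K$ (valid because $\mathcal E'_K\subset\mathcal E'_A$), and Corollary~\ref{cor-sym} correctly reduce $\|\mu^K-\mu^A\|^2$ to $\|\mu^A\|^2-\|\mu^K\|^2$, which tends to $0$ by the Bourbaki monotone convergence theorem for increasing nets of lower semicontinuous functions. No gaps.
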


A point $y\in\mathbb R^n$ is said to be {\it inner\/ $\alpha$-re\-gu\-lar\/} for $A$ if $\varepsilon_y=(\varepsilon_y)^A=:\varepsilon_y^A$, $\varepsilon_y$ being the unit Dirac measure at $y$;\footnote{$\varepsilon_y^A$ is said to be the (fractional) {\it inner\/ $\alpha$-har\-mon\-ic measure\/} of $A\subset\mathbb R^n$ at $y\in\mathbb R^n$. Being a natural generalization of the classical concept of ($2$-)har\-mo\-nic measure \cite{BH,KB0,KB,L}, $\varepsilon_y^A$ serves as the main tool in solving the generalized Dirichlet problem for $\alpha$-har\-mon\-ic functions. Besides, due to the integral representation formula
$\mu^A=\int\varepsilon_y^A\,d\mu(y)$ \cite[Theorem~5.1]{Z-bal2},
the inner $\alpha$-har\-mon\-ic measure $\varepsilon_y^A$ is a powerful tool in the investigation of the inner balayage $\mu^A$ for arbitrary $\mu$ (see~\cite{Z-bal2}).}
the set of all those $y$ is denoted by $A^r$. Then $A^r\subset\overline{A}$, since obviously $\varepsilon_x^A\in\mathcal E^+_\alpha$ for all $x\notin\overline{A}$.
The other points of $\overline{A}$, i.e.
\[y\in\overline{A}\setminus A^r=:A_I,\]
are said to be {\it inner\/ $\alpha$-ir\-reg\-ul\-ar\/} for $A$. As seen from (\ref{alt}) with $\nu:=\varepsilon_y$,
\begin{equation}\label{reg-pot}y\in A^r\iff\kappa_\alpha\mu^A(y)=\kappa_\alpha\mu(y)\text{ \ for all\ }\mu\in\mathfrak M^+.\end{equation}
It is also worth noting that for any $A\subset\mathbb R^n$, the sets $A^r$ and $A_I$ are Borel measurable \cite[Theorem~5.2]{Z-bal2}, and hence capacitable.

By the Wiener type criterion \cite[Theorem~6.4]{Z-bal},
\begin{equation}\label{w}y\notin A^r\iff\sum_{j\in\mathbb N}\,\frac{c_\alpha(A_j)}{q^{j(n-\alpha)}}<\infty,\end{equation}
where $q\in(0,1)$ and $A_j:=A\cap\{x\in\mathbb R^n:\ q^{j+1}<|x-y|\leqslant q^j\}$, while by the Kel\-logg--Ev\-ans type theorem \cite[Theorem~6.6]{Z-bal},\footnote{Observe that both (\ref{w}) and (\ref{KE}) refer to {\it inner\/} capacity; compare with the Kell\-ogg--Ev\-ans and Wiener type theorems established for {\it outer\/} balayage (see e.g.\ \cite{BH,Br,Ca2,Doob}). Regarding (\ref{KE}), we also note that the whole set $A_I$ may be of nonzero capacity \cite[Section~V.4.12]{L}.}
\begin{equation}\label{KE}c_\alpha(A\cap A_I)=0.\end{equation}
Relation (\ref{w}) implies, in particular, that $A_I\subset\partial A$, where $\partial A$ denotes the boundary of $A$ in the Euclidean topology on $\mathbb R^n$.

\section{Basic facts of the theory of inner $\alpha$-Riesz equilibrium measures}\label{sec-prel}

This section reviews some basic facts of the theory of inner $\alpha$-Riesz equilibrium measures, developed in \cite{Z-bal,Z-bal2,Z-arx-22}. The inner equilibrium measure $\gamma_A$ of $A\subset\mathbb{R}^n$ is understood in an extended sense where its energy $\kappa_\alpha(\gamma_A,\gamma_A)$ as well as its total mass $\gamma_A(\mathbb{R}^n)$ may be infinite (compare with (\ref{fcap1})).

For arbitrary $A\subset\mathbb{R}^n$, define
\begin{equation}\Gamma_A:=\bigl\{\nu\in\mathfrak M^+:\ \kappa_\alpha\nu\geqslant1\text{ \ n.e.\ on $A$}\bigr\}.\label{def-in}\end{equation}

\begin{definition}\label{def-eq}A measure\/ $\gamma_A$ is said to be the {\it inner\/ $\alpha$-Riesz equilibrium measure\/} of $A\subset\mathbb{R}^n$ if it is of minimum potential in $\Gamma_A$, that is, if $\gamma_A\in\Gamma_A$ and\footnote{In view of (permanent) assumption (\ref{main}), the inner $\alpha$-Riesz equilibrium measure $\gamma_A$ does not exist if there is no $\nu\in\mathfrak M^+$ with $\kappa_\alpha\nu\geqslant1$ n.e.\ on $A$. This implication can actually be reversed, and hence $\gamma_A$ exists if and only if $\Gamma_A\ne\varnothing$, see Theorem~\ref{th-th}(ii).}
\[\kappa_\alpha\gamma_A=\min_{\nu\in\Gamma_A}\,\kappa_\alpha\nu\text{ \ on $\mathbb R^n$}.\]
\end{definition}

It follows easily by use of \cite[Theorem~1.12]{L} that the inner $\alpha$-Riesz equilibrium measure $\gamma_A$ is {\it unique\/} (if it exists). Theorem~\ref{th-th} below provides a number of equivalent conditions, each of which is necessary and sufficient for the existence of this $\gamma_A$.
The approach applied is based on the close interaction between the concept of inner equilibrium measure and that of inner balayage, described by means of equality (\ref{har-eq}) with the Kelvin transformation involved.

For every $y\in\mathbb R^n$, define the inversion $J_y$ with respect to the sphere $S(y,1):=\{x\in\mathbb R^n:\ |x-y|=1\}$ mapping each point $x\ne y$ to the point $x^*=J_y(x)$ on the ray through $x$ issuing from $y$ which is uniquely determined by
\[|x-y|\cdot|x^*-y|=1.\]
This is a homeomorphism of $\mathbb R^n\setminus\{y\}$ onto itself having the property
\begin{equation}\label{inv}|x^*-z^*|=\frac{|x-z|}{|x-y||z-y|}\text{ \ for all\ }x,z\in\mathbb R^n\setminus\{y\}.\end{equation}
If now $A\subset\mathbb R^n$ is given, then for any $q\in(0,1)$ and any $A_j$ appearing in the Wiener type criterion (\ref{w}) of inner $\alpha$-regularity,
\begin{equation*}\label{est}q^{-2j(n-\alpha)}c_\alpha(A_j)\leqslant c_\alpha(A_j^*)\leqslant q^{-(2j+2)(n-\alpha)}c_\alpha(A_j), \ j\in\mathbb N,\end{equation*}
where $A_j^*:=J_y(A_j)$. This follows from (\ref{inv}) by use of \cite[Remark to Theorem~2.9]{L}.

For every $\nu\in\mathfrak M^+$ with $\nu(\{y\})=0$, define the {\it Kelvin transform\/}
$\nu^*=\mathcal K_y\nu\in\mathfrak M^+$ by means of the formula (see \cite[Section~IV.5.19]{L})
\begin{equation}\label{kelv-m}d\nu^*(x^*)=|x-y|^{\alpha-n}\,d\nu(x),\text{ \ where\ }x^*=J_y(x)\in\mathbb R^n.\end{equation}
Noting that $(\nu^*)^*=\nu$, we derive from (\ref{inv}) and (\ref{kelv-m}) that
\begin{gather}\nu(\mathbb R^n)=\kappa_\alpha\nu^*(y),\notag\\
\kappa_\alpha(\nu^*,\nu^*)=\kappa_\alpha(\nu,\nu),\label{k1}\\
\kappa_\alpha\nu^*(x^*)=|x-y|^{n-\alpha}\kappa_\alpha\nu(x)\text{ \ for all\ }x^*\in\mathbb R^n.\notag\end{gather}

The proof of the following theorem is based on the theory of inner balayage, reviewed in Sect.~\ref{subseq1}, as well as on the ab\-ove-quo\-ted elementary properties of the inversion and the Kelvin transformation (see \cite[Theorems~2.1, 2.2, 5.1]{Z-bal2} for details).

\begin{theorem}\label{th-th}For arbitrary\/ $A\subset\mathbb R^n$, the following\/ {\rm(i)--(v)} are equivalent.
\begin{itemize}
\item[{\rm (i)}]There exists the inner\/ $\alpha$-Riesz equilibrium measure\/ $\gamma_A$ for\/ $A$, uniquely determined by Definition\/~{\rm\ref{def-eq}}.
\item[{\rm (ii)}]There exists\/ $\nu\in\mathfrak M^+$ with
\[\essinf_{x\in A}\,\kappa_\alpha\nu(x)>0,\]
where\/ $x$ ranges over all of\/ $A$ except for a subset of inner capacity zero.
\item[{\rm (iii)}]There exists\/ $\mu\in\mathfrak M^+$ having the property\/\footnote{In general, $\mu^Q(\mathbb R^n)\leqslant\mu(\mathbb R^n)$, where $\mu\in\mathfrak M^+$ and $Q\subset\mathbb R^n$ are arbitrary. This follows e.g.\ from (\ref{ineq2}) by use of the (classical) principle of positivity of mass (see Theorem~\ref{th-fz}).}
\[\mu^A(\mathbb R^n)<\mu(\mathbb R^n).\]
\item[{\rm (iv)}]For some\/ {\rm(}equivalently, every\/{\rm)} $y\in\mathbb R^n$,
\begin{equation*}\label{iii}\sum_{j\in\mathbb N}\,\frac{c_\alpha(A_j)}{q^{j(n-\alpha)}}<\infty,\end{equation*}
where\/ $q\in(1,\infty)$ and\/ $A_j:=A\cap\{x\in\mathbb R^n:\ q^j\leqslant|x-y|<q^{j+1}\}$.
\item[{\rm (v)}]For some\/ {\rm(}equivalently, every\/{\rm)} $y\in\mathbb R^n$, the inner\/ $\alpha$-har\-mo\-nic measure\/ $\varepsilon_y^{A_y^*}$ is\/ $c_\alpha$-ab\-sol\-ut\-ely continuous.
\end{itemize}

If these\/ {\rm(i)--(v)} hold true, then for every\/ $y\in\mathbb R^n$, the Kelvin transform\/ $(\gamma_A)^*=\mathcal K_y\gamma_A$ of the inner equilibrium measure\/ $\gamma_A$ for the set\/ $A$ is actually the inner\/ $\alpha$-har\-mo\-nic measure\/ $\varepsilon_y^{A_y^*}$ for the inverse\/ $A_y^*=J_y(A)$. That is,
\begin{equation}\label{har-eq}\varepsilon_y^{A_y^*}=(\gamma_A)^*.\end{equation}
\end{theorem}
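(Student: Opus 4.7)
The unifying device is the Kelvin transformation $\mathcal K_y$ at an arbitrary $y\in\mathbb R^n$, which converts the equilibrium problem for $A$ into the inner balayage problem $\varepsilon_y\mapsto\varepsilon_y^{A_y^*}$ on the inverse set $A_y^*=J_y(A)$. The plan is to set up a Kelvin bijection between $\Gamma_A$ and a suitable subclass of $\Gamma_{A_y^*,\varepsilon_y}$, then read off each of the five conditions through this correspondence. Concretely, using the potential-transformation identity $\kappa_\alpha\nu^*(x^*)=|x-y|^{n-\alpha}\kappa_\alpha\nu(x)$ together with $|x^*-y|=|x-y|^{-1}$, I would first verify that for any $\nu\in\mathfrak M^+$, the inequality $\kappa_\alpha\nu\geqslant 1$ n.e.\ on $A$ is equivalent to $\kappa_\alpha\nu^*\geqslant\kappa_\alpha\varepsilon_y$ n.e.\ on $A_y^*$; hence $\nu\mapsto\nu^*$ provides a bijection between $\Gamma_A$ and the subset of $\Gamma_{A_y^*,\varepsilon_y}$ consisting of measures not charging $\{y\}$, and this bijection preserves the minimum-potential extremal property. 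Since $\varepsilon_y^{A_y^*}$ exists unconditionally by Theorem~\ref{th-bal} and is, by Definition~\ref{def-bal}, the minimum-potential element in $\Gamma_{A_y^*,\varepsilon_y}$, it follows that $\gamma_A$ exists if and only if $\mathcal K_y(\varepsilon_y^{A_y^*})\in\mathfrak M^+$, in which case $(\gamma_A)^*=\varepsilon_y^{A_y^*}$, yielding (\ref{har-eq}).

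The equivalences then unfold as follows. For (v)$\Leftrightarrow$(i), the conclusion $\mathcal K_y(\varepsilon_y^{A_y^*})\in\mathfrak M^+$ reduces to the condition $\varepsilon_y^{A_y^*}(\{y\})=0$, which is implied by $c_\alpha$-absolute continuity since $c_\alpha(\{y\})=0$; the converse upgrades the non-charging of $\{y\}$ to full $c_\alpha$-absolute continuity via the Kellogg--Evans theorem (\ref{KE}) applied to $A_y^*$. For (iv)$\Leftrightarrow$(v), the Wiener criterion (\ref{w}) at $y$ for $A_y^*$, combined with the inversion estimate $q^{-2j(n-\alpha)}c_\alpha(A_j)\leqslant c_\alpha(A_j^*)\leqslant q^{-(2j+2)(n-\alpha)}c_\alpha(A_j)$ exhibited in the excerpt, equates (iv) with $y\notin(A_y^*)^r$; and $y\in(A_y^*)^r$ iff $\varepsilon_y^{A_y^*}=\varepsilon_y$, which fails $c_\alpha$-absolute continuity, whereas Kellogg--Evans (\ref{KE}) supplies $c_\alpha$-absolute continuity when $y\notin(A_y^*)^r$. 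For (ii)$\Leftrightarrow$(i), rescaling the $\nu$ of (ii) by $1/\essinf_A\kappa_\alpha\nu$ produces an element of $\Gamma_A$, and a standard minimization argument using Theorem~\ref{3.9} on a pointwise minimizing sequence of potentials yields $\gamma_A$; the reverse direction is immediate with $\nu:=\gamma_A$. For (iii)$\Leftrightarrow$(i), the Kelvin transformation converts the mass defect $\mu(\mathbb R^n)-\mu^A(\mathbb R^n)$ into the potential defect $\kappa_\alpha\mu^*(y)-\kappa_\alpha(\mu^*)^{A_y^*}(y)$ (after choosing $y\notin\overline A$ so that $(\mu^A)^*$ is well-defined), and by (\ref{reg-pot}) this is strictly positive for some $\mu$ exactly when $y\notin(A_y^*)^r$, i.e.\ exactly when (iv) holds. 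That the conditions (iv), (v) are independent of the choice of $y$ is then automatic, since (i), (ii), (iii) are.

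The main obstacle I anticipate is the direction (i)$\Rightarrow$(v): upgrading the mere non-charging of $\{y\}$ by $\varepsilon_y^{A_y^*}$ to full $c_\alpha$-absolute continuity. Point masses being the paradigmatic failure of $c_\alpha$-absolute continuity, one must control $\varepsilon_y^{A_y^*}$ on the possibly non-negligible irregular set $(A_y^*)_I$; this is precisely where the Kellogg--Evans theorem (\ref{KE}), together with the defining extremal property of inner balayage, becomes essential. A secondary technical point is the rigorous justification of the Kelvin correspondence for measures of possibly infinite total mass or energy, which I would resolve by approximating $\mu$ through the finite-energy sequence $\mu_j\in\mathcal E_\alpha^+$ with $\kappa_\alpha\mu_j\uparrow\kappa_\alpha\mu$ provided by Theorem~\ref{th-bal}(b), and then passing to the limit via the monotone convergence Theorem~\ref{3.9}.
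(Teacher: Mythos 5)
Your proposal follows essentially the same route as the paper, which itself only outlines the argument --- reduction via the Kelvin transformation $\mathcal K_y$ of the equilibrium problem for $A$ to the inner balayage $\varepsilon_y\mapsto\varepsilon_y^{A_y^*}$ onto the inverse set $A_y^*$, combined with the Wiener criterion, relation (\ref{reg-pot}), and the regularity dichotomy at $y$ --- and defers all details to \cite[Theorems~2.1, 2.2, 5.1]{Z-bal2}. The one imprecision worth noting is in your step (i)$\Rightarrow$(v): the upgrade from $\varepsilon_y^{A_y^*}(\{y\})=0$ to full $c_\alpha$-absolute continuity comes not from the Kellogg--Evans theorem (\ref{KE}) but from the domination $\kappa_\alpha\varepsilon_y^{A_y^*}\leqslant\kappa_\alpha\varepsilon_y$ (which is bounded on any compact $K$ with $y\notin K$) together with the fact that a nonzero measure carried by a compact set of zero capacity must have unbounded potential on its support.
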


\begin{definition}[{\rm see \cite[Definition~2.1]{Z-bal2}}]\label{def-th} $A\subset\mathbb R^n$ is said to be {\it inner\/ $\alpha$-thin
at infinity\/} if any of the (equivalent) assertions (i)--(v) in Theorem~\ref{th-th} holds true. Thus $A$ is inner $\alpha$-thin at infinity if and only if some (equivalently, every) point $y\in\mathbb R^n$ is inner $\alpha$-irregular for $A_y^*$, the inverse of $A$ with respect to the sphere $S(y,1)$.\footnote{The concept of inner $\alpha$-thinness of a set at infinity, and particularly its alternative characterization provided by Theorem~\ref{th-th}(iii), has already found a number of applications to minimum $\alpha$-Riesz energy problems for condensers (see e.g.\ \cite{FZ-Pot1}, \cite{Z1}--\cite{Z-arx0}).}\end{definition}

\begin{remark}
     The concept of inner $\alpha$-thinness at infinity thus introduced actually coincides with that of $\alpha$-thinness at infinity by T.~Kurokawa and Y.~Mizuta \cite[Definition~3.1]{KM}. Indeed, to validate this, it is enough to show that the concept of capacity used in \cite{KM} (see p.~534 therein) is equivalent to that given by (\ref{capp}), which however directly follows from \cite[Theorem~4.2]{Z-arx-22}. Also note that in the case where $\alpha=2$ while $A$ is Borel, the above concept of inner $2$-thinness at infinity coincides with that of {\it outer\/} $2$-thinness at infinity, introduced by J.L.~Doob \cite[pp.~175--176]{Doob}.
\end{remark}

For the following theorem we refer to \cite[Theorem~2.3]{Z-bal2} (compare with \cite[Section~V.2.8]{L}, where $A$ was Borel).

\begin{theorem}\label{th-f-en}For arbitrary\/ $A\subset\mathbb R^n$, the following\/ {\rm(i$_1$)--(iii$_1$)} are equivalent.
\begin{itemize}
\item[{\rm(i$_1$)}] The inner\/ $\alpha$-Riesz capacity of\/ $A$ is finite:
\[c_\alpha(A)<\infty.\]
\item[{\rm(ii$_1$)}] For some\/ {\rm(}equivalently, every\/{\rm)} $y\in\mathbb R^n$,
\begin{equation*}\label{e-cap-f}\sum_{j\in\mathbb N}\,c_\alpha(A_j)<\infty,\end{equation*}
where\/ $q\in(1,\infty)$ and\/ $A_j:=A\cap\{x\in\mathbb R^n:\ q^j\leqslant|x-y|<q^{j+1}\}$.
\item[{\rm(iii$_1$)}] For some\/ {\rm(}equivalently, every\/{\rm)} $y\in\mathbb R^n$,
\begin{equation*}\label{bal-f-e}\varepsilon_y^{A_y^*}\in\mathcal E^+_\alpha.\end{equation*}
\end{itemize}
\end{theorem}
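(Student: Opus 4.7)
The plan is to prove the three equivalences by using (i$_1$) as the pivot, since it alone is manifestly independent of $y$; this automatically settles the ``some equivalently every $y$'' clauses in (ii$_1$) and (iii$_1$). The equivalence (i$_1$) $\Leftrightarrow$ (iii$_1$) will proceed via the Kelvin transformation and the identity (\ref{har-eq}), while (i$_1$) $\Leftrightarrow$ (ii$_1$) will be obtained by constructing explicit competitor measures built from the classical bounded equilibrium measures $\gamma_{A_j}$.

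For (i$_1$) $\Rightarrow$ (iii$_1$): if $c_\alpha(A)<\infty$, then by (\ref{fcap1}) the inner equilibrium measure $\gamma_A$ exists and lies in $\mathcal E^+_\alpha$, so $A$ is inner $\alpha$-thin at infinity by Theorem~\ref{th-th} and Definition~\ref{def-th}; hence for every $y\in\mathbb R^n$ the identity (\ref{har-eq}) gives $\varepsilon_y^{A_y^*}=(\gamma_A)^*$, and the Kelvin energy identity (\ref{k1}) yields $\kappa_\alpha(\varepsilon_y^{A_y^*},\varepsilon_y^{A_y^*})=\kappa_\alpha(\gamma_A,\gamma_A)<\infty$. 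Conversely, if $\varepsilon_y^{A_y^*}\in\mathcal E^+_\alpha$ at some $y$, then finite energy implies $c_\alpha$-absolute continuity, so by the implication (v)$\Rightarrow$(i) of Theorem~\ref{th-th}, $\gamma_A$ exists, and reading (\ref{har-eq}) with involutivity $(\nu^*)^*=\nu$ and (\ref{k1}) transfers finite energy back to $\gamma_A$; then (\ref{fcap1}) delivers $c_\alpha(A)<\infty$.

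For (ii$_1$) $\Rightarrow$ (i$_1$): fix $y$ and $q>1$, let $\mu_j:=\gamma_{A_j}$ be the classical equilibrium measure of the bounded set $A_j$ (which has $c_\alpha(A_j)<\infty$ by the convergent hypothesis), and set $\mu:=\sum_j\mu_j$. Since the $A_j$ are pairwise disjoint and form a locally finite family in $\mathbb R^n$, $\mu$ is a Radon measure of total mass $\sum_j c_\alpha(A_j)<\infty$, with $\kappa_\alpha\mu\geqslant\kappa_\alpha\mu_j=1$ n.e.\ on each $A_j$ and hence n.e.\ on $A$; in particular $\mu\in\Gamma_A$. The critical step is to verify $\mu\in\mathcal E^+_\alpha$: one decomposes $\kappa_\alpha(\mu,\mu)=\sum_{j,k}\kappa_\alpha(\mu_j,\mu_k)$ into the diagonal part, equal to $\sum_j c_\alpha(A_j)$; the near-diagonal part $|j-k|\leqslant 1$, bounded via the Frostman maximum principle $\kappa_\alpha\mu_j\leqslant 1$ (Theorem~\ref{dom} with $\nu=0$, $q=1$) by $\sum_j c_\alpha(A_{j\pm1})$; and the far part $|j-k|\geqslant 2$, for which the supports are separated by a distance $\gtrsim(q-1)q^{\min(j,k)}$, giving geometric decay of $|x-z|^{\alpha-n}$ that sums to a multiple of $\bigl(\sum_j c_\alpha(A_j)\bigr)^2$. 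Hence $\mu\in\mathcal E^+_\alpha\cap\Gamma_A$, and the existence of a finite-energy element of $\Gamma_A$ forces $c_\alpha(A)<\infty$ by the classical variational characterization of $\gamma_A$.

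For (i$_1$) $\Rightarrow$ (ii$_1$): thicken each annulus to $B_j:=\{q^{j-1}\leqslant|x-y|<q^{j+2}\}$ and split $\gamma_A=\gamma_A|_{B_j}+\gamma_A|_{B_j^c}$. For $x\in A_j$ and $z\in B_j^c$ the distance bound $|x-z|\geqslant(q-1)q^{j-1}$ gives $\kappa_\alpha(\gamma_A|_{B_j^c})(x)\leqslant(q-1)^{\alpha-n}q^{-(j-1)(n-\alpha)}c_\alpha(A)$, which is $\leqslant1/2$ uniformly in $x\in A_j$ for all $j\geqslant j_0$ with $j_0$ large. Since $\kappa_\alpha\gamma_A\geqslant 1$ n.e.\ on $A_j$, it follows that $2\gamma_A|_{B_j}\in\Gamma_{A_j}$, and the minimum-mass extremal property of $\gamma_{A_j}$ (derived from Definition~\ref{def-eq} and Theorem~\ref{th-fz}) yields $c_\alpha(A_j)=\gamma_{A_j}(\mathbb R^n)\leqslant 2\gamma_A(B_j)$. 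Each point of $\mathbb R^n$ lies in at most three of the $B_j$'s, so $\sum_{j\geqslant j_0}c_\alpha(A_j)\leqslant 6c_\alpha(A)<\infty$, and the finitely many remaining indices $j<j_0$ are absorbed by monotonicity $c_\alpha(A_j)\leqslant c_\alpha(A)$. The main obstacle is the off-diagonal energy estimate in (ii$_1$) $\Rightarrow$ (i$_1$): without the Frostman bound $\kappa_\alpha\mu_j\leqslant 1$ controlling neighbouring annuli and the geometric separation estimate controlling distant ones, the double sum could easily diverge, so these two ingredients must be dovetailed carefully.
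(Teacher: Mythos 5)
The paper gives no proof of Theorem~\ref{th-f-en} at all: it defers entirely to \cite[Theorem~2.3]{Z-bal2}, with a pointer to \cite[Section~V.2.8]{L} for the Borel case. Your argument is therefore not comparable line-by-line with anything in the text, but it is a legitimate and essentially self-contained proof assembled from ingredients the paper does supply: the Kelvin identities (\ref{k1}) and (\ref{har-eq}) together with Theorem~\ref{th-th} give (i$_1$)$\Leftrightarrow$(iii$_1$), while a direct annular decomposition gives (i$_1$)$\Leftrightarrow$(ii$_1$). Pivoting on the $y$-independent condition (i$_1$) to dispose of the ``some/every'' clauses is clean, and the two quantitative steps you flag as critical are both sound: in (ii$_1$)$\Rightarrow$(i$_1$) the Frostman bound $\kappa_\alpha\gamma_{A_j}\leqslant1$ controls all pairs by $\min\bigl(c_\alpha(A_j),c_\alpha(A_k)\bigr)$ but that alone does not sum, so the geometric separation $|x-z|\gtrsim(q-1)q^{\min(j,k)}$ for $|j-k|\geqslant2$ is genuinely needed and suffices; in (i$_1$)$\Rightarrow$(ii$_1$) the tail estimate $\kappa_\alpha(\gamma_A|_{B_j^c})\leqslant\tfrac12$ on $A_j$ for large $j$, the bounded overlap of the $B_j$, and the minimum-mass property of $\gamma_{A_j}$ (Definition~\ref{def-eq} plus Theorem~\ref{th-fz}, exactly as in Corollary~\ref{cor-bal}) combine correctly. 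This is in the spirit of \cite[Section~V.2.8]{L}, extended from Borel to arbitrary $A$ via inner capacity and Lemma~\ref{str}.

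Three small patches are needed, none structural. First, $\bigcup_{j\in\mathbb N}A_j$ omits the bounded piece $A\cap\{|x-y|<q\}$ (or $<q^1$, depending on where $\mathbb N$ starts), so your $\mu=\sum_j\gamma_{A_j}$ lies in $\Gamma_{\bigcup_jA_j}$ rather than in $\Gamma_A$; add the equilibrium measure of that bounded set (it has finite capacity and finite energy, being bounded) and invoke Lemma~\ref{str} to pass from ``n.e.\ on each piece'' to ``n.e.\ on $A$''. Second, in (iii$_1$)$\Rightarrow$(i$_1$) you should not quote (\ref{fcap1}) to conclude $c_\alpha(A)<\infty$, since (\ref{fcap1}) presupposes that very finiteness; instead use the Cauchy--Schwarz argument you already state at the end of (ii$_1$)$\Rightarrow$(i$_1$): a measure $\gamma\in\mathcal E^+_\alpha$ with $\kappa_\alpha\gamma\geqslant1$ n.e.\ on $A$ forces $c_\alpha(K)\leqslant\|\gamma\|^2$ for every compact $K\subset A$, hence $c_\alpha(A)\leqslant\|\gamma\|^2$. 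Third, you silently identify the extended-sense equilibrium measure of Definition~\ref{def-eq} (the one appearing in (\ref{har-eq})) with the classical finite-energy measure of (\ref{fcap1})--(\ref{fcap2}); this is true when $c_\alpha(A)<\infty$, but it deserves a sentence (the classical $\gamma_A$ is of minimum potential in all of $\Gamma_A$, not merely in $\Gamma_A\cap\mathcal E^+_\alpha$, which is where the domination principle, Theorem~\ref{dom}, and the $c_\alpha$-absolute continuity of finite-energy measures enter), or a citation to \cite[Theorem~9.1]{Z-arx-22} as the paper itself does.
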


\begin{corollary}\label{twoC}
The following two conclusions\/ {\rm(a)} and\/ {\rm(b)} are obtained by comparing Theorems\/~{\rm\ref{th-th}} and\/ {\rm\ref{th-f-en}}.
\begin{itemize}
\item[{\rm(a)}] If\/ $A\subset\mathbb R^n$ is not\/ $\alpha$-thin at infinity, then necessarily\/ $c_\alpha(A)=\infty$.
\item[{\rm(b)}] There exists\/ $A_0\subset\mathbb R^n$ which is\/ $\alpha$-thin at infinity, but nonetheless, $c_\alpha(A_0)=\infty$.
\end{itemize}\end{corollary}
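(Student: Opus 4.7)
The plan is to use the shared annular decomposition $A_j := A \cap \{x \in \mathbb{R}^n : q^j \leq |x-y| < q^{j+1}\}$, $q \in (1,\infty)$, which appears identically in Theorems~\ref{th-th}(iv) and~\ref{th-f-en}(ii$_1$). The former characterizes inner $\alpha$-thinness of $A$ at infinity by the convergence of $\sum_j c_\alpha(A_j)/q^{j(n-\alpha)}$; the latter characterizes $c_\alpha(A) < \infty$ by the convergence of $\sum_j c_\alpha(A_j)$. Part (a) will be a one-line comparison of these two series, and part (b) will consist in building a set whose annular capacities separate the two convergence conditions.

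For (a), I would fix $y := 0$ and any $q \in (1,\infty)$. Since $n - \alpha > 0$ and $q > 1$, we have $1/q^{j(n-\alpha)} \leq 1$ for every $j \geq 1$, whence
\[
\sum_{j \in \mathbb{N}} \frac{c_\alpha(A_j)}{q^{j(n-\alpha)}} \leq \sum_{j \in \mathbb{N}} c_\alpha(A_j).
\]
Thus $c_\alpha(A) < \infty$ forces, via Theorem~\ref{th-f-en}(ii$_1$), the right-hand series, hence also the left-hand series, to converge, and by Theorem~\ref{th-th}(iv) the set $A$ is inner $\alpha$-thin at infinity. The contrapositive is (a); no step is substantive here.

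For (b), the plan is to take $A_0 := \bigcup_{j \in \mathbb{N}} B_j$, a disjoint union of closed Euclidean balls with $B_j \subset \{x \in \mathbb{R}^n : q^j \leq |x| < q^{j+1}\}$, whose radii $r_j$ are tuned so that $c_\alpha(B_j) \asymp q^{j(n-\alpha)}/j^2$. The classical scaling $c_\alpha(B(0,r)) = C\, r^{n-\alpha}$ (cf.\ \cite[Section~II.3.13]{L}) dictates $r_j \asymp q^j/j^{2/(n-\alpha)}$, which is much smaller than the annulus half-width $q^j(q-1)/2$ once $j$ is large enough, so the balls fit disjointly, one per annulus. With $y := 0$ this gives $A_{0,j} = B_j$, and hence
\[
\sum_{j \in \mathbb{N}} c_\alpha(A_{0,j}) = \sum_{j \in \mathbb{N}} \frac{q^{j(n-\alpha)}}{j^2} = \infty, \qquad \sum_{j \in \mathbb{N}} \frac{c_\alpha(A_{0,j})}{q^{j(n-\alpha)}} = \sum_{j \in \mathbb{N}} \frac{1}{j^2} < \infty.
\]
Theorem~\ref{th-f-en} then yields $c_\alpha(A_0) = \infty$, while Theorem~\ref{th-th}(iv) yields that $A_0$ is inner $\alpha$-thin at infinity.

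The only nontrivial ingredient I expect to invoke is the scaling law for the $\alpha$-Riesz capacity of a Euclidean ball, which is standard. The only bookkeeping to watch is that the annular pieces $A_{0,j}$ of the constructed set are precisely the building blocks $B_j$—secured by the strict disjoint placement of one $B_j$ inside each annulus—so that the two capacity series produced by the construction are the ones appearing in the two theorems. The choice of weights $1/j^2$ is arbitrary: any positive sequence $(w_j)$ with $\sum_j w_j = \infty$ and $\sum_j w_j/q^{j(n-\alpha)} < \infty$ would serve equally well.
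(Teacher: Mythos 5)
Your proof is correct, and part (a) is exactly the paper's argument: the corollary is stated as a direct comparison of the Wiener-type series in Theorem~\ref{th-th}(iv) and the series in Theorem~\ref{th-f-en}(ii$_1$), and your termwise estimate $c_\alpha(A_j)/q^{j(n-\alpha)}\leqslant c_\alpha(A_j)$ is precisely that comparison. For part (b) you differ only in the choice of witness: the paper points to the rotation body $F_2$ of Example~\ref{ex} (the exponential horn in $\mathbb{R}^3$ with $s\leqslant1$, which is $2$-thin at infinity yet of infinite Newtonian capacity), whereas you build a disjoint union of one ball per annulus with capacities $c_\alpha(B_j)\asymp q^{j(n-\alpha)}/j^2$, separating the two series by hand. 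Your construction is arguably more self-contained and works uniformly for all admissible $n$ and $\alpha$ rather than for a specific $(n,\alpha)=(3,2)$ example; the only loose end is the finitely many small indices $j$ for which the ball does not fit inside its annulus, which you should dispose of explicitly (e.g.\ start the union at a sufficiently large $j_0$, which affects neither series). Both routes rest on the same two characterizations, so the approaches are essentially equivalent in substance.
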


\begin{example}\label{exx}
For instance, the set $F_2\subset\mathbb R^3$, defined by means of formulae (\ref{descr}) and (\ref{ex2}), is $2$-thin at infinity; whereas  its Newtonian capacity is finite if and only if $s>1$, where $s$ is the parameter involved in (\ref{ex2}).
\end{example}

Using (\ref{k1}) and (\ref{har-eq}), we conclude from Theorem~\ref{th-f-en} that the inner $\alpha$-Riesz equilibrium measure $\gamma_A$ has finite energy if and only if $c_\alpha(A)<\infty$; and in the affirmative case it can alternatively be characterized, for instance, as the only measure in $\mathcal E'_A$ having the property $\kappa_\alpha\gamma_A=1$ n.e.\ $A$. Regarding the latter, see \cite[Theorem~9.1]{Z-arx-22}.

In the case where the inner equilibrium measure $\gamma_A$ still exists, although its energy may now be infinite, $\gamma_A$ can be described as follows (see \cite[Sections~5, 6]{Z-bal}).

\begin{theorem}\label{th-eqq} For any\/ $A\subset\mathbb R^n$ that is inner\/ $\alpha$-thin at infinity, the inner\/ $\alpha$-Riesz equilibrium measure $\gamma_A$ has the properties\/\footnote{Thus $\kappa_\alpha\gamma_A=1$ n.e.\ on $A$, see (\ref{KE}) and (\ref{onregu}).}
\begin{gather}\kappa_\alpha\gamma_A=1\text{ \ on\/ $A^r$},\label{onregu}\\
\kappa_\alpha\gamma_A\leqslant1\text{ \ on\/ $\mathbb R^n$}.\notag\end{gather}
Furthermore, $\gamma_A$ can be characterized as the unique measure in\/ $\mathfrak M^+$ satisfying either of the two limit relations
\begin{gather}\gamma_K\to\gamma_A\text{ \ vaguely in\/ $\mathfrak M^+$ as\/ $K\uparrow A$},\label{upar''}\\
\kappa_\alpha\gamma_K\uparrow\kappa_\alpha\gamma_A\text{ \ pointwise on\/ $\mathbb R^n$ as\/ $K\uparrow A$},\label{upar'}\end{gather}
where\/ $\gamma_K$ denotes the only measure in\/ $\mathcal E^+_K$ with\/ $\kappa_\alpha\gamma_K=1$ n.e.\ on\/ $K$.
\end{theorem}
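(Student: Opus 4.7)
The plan is to construct $\gamma_A$ by exhausting $A$ with its compact subsets and to verify the listed properties from that construction. For each $K\in\mathfrak C_A$ one has $c_\alpha(K)<\infty$, so classical theory (Sect.~\ref{sec-prel'}) yields a unique $\gamma_K\in\mathcal E^+_K$ with $\kappa_\alpha\gamma_K=1$ n.e.\ on $K$. The Frostman maximum principle (Theorem~\ref{dom} with $\nu=0$, $q=1$) gives $\kappa_\alpha\gamma_K\leqslant 1$ on $\mathbb R^n$. The net $(\kappa_\alpha\gamma_K)$ is monotone in $K$: if $K_1\subset K_2$, then $\kappa_\alpha\gamma_{K_1}\leqslant\kappa_\alpha\gamma_{K_2}$ holds n.e.\ on $K_1$, hence $\gamma_{K_1}$-a.e.\ (as $\gamma_{K_1}\in\mathcal E^+_\alpha$ is $c_\alpha$-ab\-sol\-ut\-ely continuous), and domination promotes this inequality to all of $\mathbb R^n$.

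Since $A$ is inner $\alpha$-thin at infinity, Theorem~\ref{th-th}(i) guarantees $\Gamma_A\ne\varnothing$; picking any $\nu\in\Gamma_A$ and applying domination once more (to $\kappa_\alpha\gamma_K\leqslant\kappa_\alpha\nu$ n.e.\ on $K$) shows the net is majorized on $\mathbb R^n$ by the potential $\kappa_\alpha\nu$. Theorem~\ref{3.9} then delivers $\gamma_0\in\mathfrak M^+$ with $\gamma_K\to\gamma_0$ vaguely and $\kappa_\alpha\gamma_K\uparrow\kappa_\alpha\gamma_0$ pointwise on $\mathbb R^n$. To identify $\gamma_0=\gamma_A$ I would check (a) $\gamma_0\in\Gamma_A$: on each compact $K\subset A$ we have $\kappa_\alpha\gamma_0\geqslant\kappa_\alpha\gamma_K=1$ n.e.\ on $K$, and the inner regularity of $c_\alpha$ together with the fact that the set $\{x\in A:\kappa_\alpha\gamma_0(x)<1\}$ cannot contain a compact of positive capacity propagates this to $\kappa_\alpha\gamma_0\geqslant 1$ n.e.\ on $A$; (b) minimality: for any $\sigma\in\Gamma_A$ one has $\kappa_\alpha\sigma\geqslant 1=\kappa_\alpha\gamma_K$ n.e.\ on $K$, hence $\kappa_\alpha\sigma\geqslant\kappa_\alpha\gamma_K$ on $\mathbb R^n$ by domination, and passing to the limit gives $\kappa_\alpha\sigma\geqslant\kappa_\alpha\gamma_0$. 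Thus $\gamma_0=\gamma_A$, and the global bound $\kappa_\alpha\gamma_A\leqslant 1$ is the pointwise limit of $\kappa_\alpha\gamma_K\leqslant 1$.

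For the pointwise equality $\kappa_\alpha\gamma_A=1$ on $A^r$ -- the subtlest assertion -- I would exploit the Wiener-type characterization (\ref{w}) of inner $\alpha$-regularity. Given $y\in A^r$, the divergence $\sum_j c_\alpha(A_j)/q^{j(n-\alpha)}=\infty$ combined with the inner regularity of $c_\alpha$ (approximating each annular piece $A_j$ from within by a compact $K_j$ of almost equal capacity) produces an $F_\sigma$-set $A'=\bigcup_j K_j\subset A$ for which the same series still diverges, so $y$ remains inner $\alpha$-regular for $A'$. A reduction argument via Theorem~\ref{bal-cont} or via the Kelvin transform (using the key identity (\ref{har-eq}) to transfer the question at $y$ to an inner harmonic-measure statement for $A_y^*$ at infinity) then pins down $\kappa_\alpha\gamma_A(y)=1$. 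The main obstacle is precisely this step: one must upgrade the soft n.e.\ equality (which would otherwise follow from $\kappa_\alpha\gamma_A\leqslant 1$ together with Kellogg--Evans (\ref{KE})) to a genuinely pointwise equality on $A^r$, and the argument has to accommodate the possibility that $\gamma_A$ has infinite energy (so orthogonal-projection arguments in $\mathcal E_\alpha$ are unavailable).

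Uniqueness of $\gamma_A$ among measures realizing either limit is then automatic: any competitor $\gamma$ satisfies $\kappa_\alpha\gamma=\kappa_\alpha\gamma_A$ pointwise on $\mathbb R^n$, and the uniqueness principle for Riesz potentials (via \cite[Theorem~1.12]{L}, already cited in the paper for the uniqueness of $\gamma_A$ itself) forces $\gamma=\gamma_A$; the vague limit is also unique because of the vague compactness built into the statement of Theorem~\ref{3.9}.
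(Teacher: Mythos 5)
First, for context: the paper does not prove Theorem~\ref{th-eqq} itself; it quotes it from \cite[Sections~5, 6]{Z-bal} and adds only a remark deriving the limit relations (\ref{upar''}) and (\ref{upar'}) from the continuity of inner balayage under exhaustion (Theorem~\ref{bal-cont}) via the identity $\gamma_Q=(\gamma_A)^Q$ for $Q\subset A$. Your direct construction is therefore a genuinely different and more self-contained route, and most of it is correct: the monotonicity of the net $(\kappa_\alpha\gamma_K)_{K\in\mathfrak C_A}$ via the domination principle, the majorization by $\kappa_\alpha\nu$ with $\nu\in\Gamma_A$ (this is exactly where inner $\alpha$-thinness at infinity enters, since Theorem~\ref{3.9} requires a potential, not a constant, as majorant), the passage to the limit, the identification of the limit measure with $\gamma_A$ through membership in $\Gamma_A$ and minimality of the potential, the bound $\kappa_\alpha\gamma_A\leqslant1$, and the uniqueness assertions are all in order.

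The one genuine gap is (\ref{onregu}), and you have flagged it yourself without closing it. Your construction yields $\kappa_\alpha\gamma_A\geqslant1$ n.e.\ on $A$ and $\kappa_\alpha\gamma_A\leqslant1$ everywhere, hence $\kappa_\alpha\gamma_A=1$ n.e.\ on $A$; but (\ref{onregu}) asserts equality at \emph{every} point of $A^r$, and $A^r$ is in general not contained in $A$ (only in $\overline{A}$), so the n.e.\ information on $A$ reaches neither the points of $A^r\setminus A$ nor the capacity-zero exceptional subset of $A\cap A^r$. Your sketch (compact exhaustion of the Wiener annuli, then ``a reduction argument via Theorem~\ref{bal-cont} or via the Kelvin transform'') never says how inner regularity of a point forces the potential to attain the value $1$ there. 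A completion that stays within the results quoted in the paper runs as follows: fix $x\in A^r$ and any $y\ne x$; by the Kelvin formula $\kappa_\alpha(\gamma_A)^*(x^*)=|x-y|^{n-\alpha}\kappa_\alpha\gamma_A(x)$ together with identity (\ref{har-eq}), the claim $\kappa_\alpha\gamma_A(x)=1$ is equivalent to $\kappa_\alpha\varepsilon_y^{A_y^*}(x^*)=\kappa_\alpha\varepsilon_y(x^*)$, which is precisely (\ref{reg-pot}) applied to the set $A_y^*$ at the point $x^*=J_y(x)$ --- provided one first checks that $J_y$ carries inner $\alpha$-regular points of $A$ (other than $y$) to inner $\alpha$-regular points of $A_y^*$, a consequence of the Wiener criterion (\ref{w}) and the capacity distortion estimate for inversions derived from (\ref{inv}). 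Without some such argument the pointwise statement on $A^r$ remains unproved; note that it is exactly this pointwise (not n.e.) version that the paper needs for Remark~\ref{rem-any}.
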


\begin{remark}Alternatively,
(\ref{upar''}) and (\ref{upar'}) will follow from Theorem~\ref{bal-cont} once we show that for any given $Q\subset A$,
\begin{equation*}\label{baleq}
\gamma_Q=(\gamma_A)^Q.
\end{equation*}
To this end, it is enough to prove that $\Gamma_Q=\Gamma_{Q,\gamma_A}$ (cf.\ Definitions~\ref{def-bal}, \ref{def-eq}). This however is obvious in view of the fact that for any given $\nu\in\Gamma_{Q,\gamma_A}\cup\Gamma_Q$,
\[\kappa_\alpha\nu\geqslant1=\kappa_\alpha\gamma_A\text{ \ n.e.\ on $Q$},\]
which in turn is derived from (\ref{gamma}) (with $A:=Q$ and $\mu:=\gamma_A$), (\ref{def-in}), and (\ref{onregu}) by making use of Lemma~\ref{str}.
\end{remark}

\section{Proofs of Theorems~\ref{th1}, \ref{th-sharp}, and \ref{th-any}}\label{sec-proofs}

\subsection{Proof of Theorem~\ref{th1}} Given $\mu,\nu\in\mathfrak{M}^+$, assume that
\begin{equation}\label{eq1-th1}
  \kappa_\alpha\mu\leqslant\kappa_\alpha\nu\text{ \ n.e.\ on $A$},
  \end{equation}
where $A\subset\mathbb{R}^n$ is {\it not\/} inner $\alpha$-thin at infinity (Definition~\ref{def-th}). By Theorem~\ref{th-th}(iii),
\begin{equation}\label{mua}
  \mu^A(\mathbb{R}^n)=\mu(\mathbb{R}^n),
\end{equation}
$\mu^A$ being the inner balayage of $\mu$ to $A$. Noting from (\ref{eq1-th1}) that $\nu\in\Gamma_{A,\mu}$, where $\Gamma_{A,\mu}$ was introduced by (\ref{gamma}), we conclude from Definition~\ref{def-bal} that
\[\kappa_\alpha\mu^A\leqslant\kappa_\alpha\nu\text{ \ everywhere on $\mathbb R^n$},\]
and hence, by applying Theorem~\ref{th-fz}, that
\[\mu^A(\mathbb{R}^n)\leqslant\nu(\mathbb{R}^n).\]
Combining this with (\ref{mua}) proves $\mu(\mathbb{R}^n)\leqslant\nu(\mathbb{R}^n)$, which was the claim.

\subsection{Proof of Theorem~\ref{th-sharp}} Let $A\subset\mathbb{R}^n$ be inner $\alpha$-thin at infinity. Then, by virtue of Definition~\ref{def-th} and Theorem~\ref{th-th}(iii), one can choose $\mu_0\in\mathfrak{M}^+$ so that
\begin{equation}\label{mub}\mu_0^A(\mathbb{R}^n)<\mu_0(\mathbb{R}^n).\end{equation}
But according to (\ref{ineq1}) applied to $\mu_0$,
\[\kappa_\alpha\mu_0^A=\kappa_\alpha\mu_0\text{ \ n.e.\ on $A$},\]
which together with (\ref{mub}) validates the theorem with $\mu_0$, chosen above, and $\nu_0:=\mu_0^A$.

\subsection{Proof of Theorem~\ref{th-any}}\label{theend}  Given $A\subset\mathbb{R}^n$ and $c_\alpha$-absolutely continuous measures $\mu,\nu\in\mathfrak{M}^+_A$ having property (\ref{eq1-th1}), we aim to show that then necessarily \[\mu(\mathbb{R}^n)\leqslant\nu(\mathbb{R}^n).\]
The set $A^c$ being $\xi$-negligible for any $\xi\in\mathfrak{M}^+_A$, this is equivalent to the inequality
\begin{equation}\label{muc'}\int1_A\,d\mu\leqslant\int1_A\,d\nu,\end{equation}
where $1_A$ denotes the indicator function of $A$.

We can certainly assume that the set $A$ is inner $\alpha$-thin at infinity, for if not, the claim holds by virtue of Theorem~\ref{th1}. According to Theorem~\ref{th-th}, then there exists the inner $\alpha$-Riesz equilibrium measure $\gamma_A$, uniquely determined by Definition~\ref{def-eq}.

For any compact $K\subset A$, we obtain by Fubini's theorem
\begin{equation}\label{muc}\int\kappa_\alpha\gamma_K\,d\mu=\int\kappa_\alpha\mu\,d\gamma_K\leqslant\int\kappa_\alpha\nu\,d\gamma_K=
\int\kappa_\alpha\gamma_K\,d\nu,\end{equation}
where $\gamma_K\in\mathcal E^+_K$ denotes the equilibrium measure on $K$. (The inequality in (\ref{muc}) is valid in view of the fact that $\kappa_\alpha\mu\leqslant\kappa_\alpha\nu$ holds n.e.\ on $K$, hence $\gamma_K$-a.e., every Borel $E\subset\mathbb{R}^n$ with $c_\alpha(E)=0$ being $\xi$-neg\-lig\-ible for any $\xi\in\mathcal E^+_\alpha$, see Sect.~\ref{sec-prel'}.)

According to (\ref{upar'}), the net $(\kappa_\alpha\gamma_K)_{K\in\mathfrak C_A}$ of positive, lower semicontinuous functions increases pointwise on $\mathbb{R}^n$ to $\kappa_\alpha\gamma_A$. Applying \cite[Section~IV.1, Theorem~1]{B2} to the first and the last integrals in (\ref{muc}), we therefore get
\[\int\kappa_\alpha\gamma_A\,d\mu\leqslant\int\kappa_\alpha\gamma_A\,d\nu.\]
This implies (\ref{muc'}), because $\kappa_\alpha\gamma_A=1$ holds true n.e.\ on $A$ (see (\ref{onregu}) and (\ref{KE})), hence $(\mu+\nu)$-a.e. To verify the latter, observe that $N:=A\cap\{\kappa_\alpha\gamma_A<1\}$ is $(\mu+\nu)$-mea\-sur\-able, for so is the set $A$, $\mu+\nu$ being concentrated on $A$. Since $c_\alpha(N)=0$ while $\mu+\nu$ is $c_\alpha$-ab\-sol\-ut\-ely continuous, the set $N$ must be $(\mu+\nu)$-neg\-lig\-ible.

This completes the proof of Theorem~\ref{th-any}.
With regard to Remark~\ref{rem-any}, note that in the case $A\cap A_I=\varnothing$, the $c_\alpha$-absolute continuity of $\mu$ and $\nu$ is unnecessary for the validity of the above proof, for then, again by (\ref{onregu}), $\kappa_\alpha\gamma_A=1$ everywhere on $A$.

\section{Acknowledgements} The author is deeply indebted to Douglas P.\ Hardin and  Bent Fuglede for reading and commenting on the manuscript. 


\end{document}